\documentclass[oneside,10pt,leqno]{amsart}
\usepackage{amssymb,amsmath,latexsym,amscd}
\usepackage{array,hhline}

\usepackage{longtable}

\pagestyle{plain}

\setlength{\hoffset}{-1in}
\setlength{\voffset}{-1.5in}
\setlength{\oddsidemargin}{1in}
\setlength{\evensidemargin}{1in}
\setlength{\textwidth}{6.5in}
\setlength{\textheight}{8.5in}
\setlength{\topmargin}{1in}
\setlength{\baselineskip}{10pt}
\setlength{\parskip}{3pt}
\setlength{\LTcapwidth}{6in}



\def\gg{\mathfrak{g}}
\def\gh{\mathfrak{h}}

\def\gl{\mathfrak{l}}

\def\gn{\mathfrak{n}}

\def\gp{\mathfrak{p}}

\def\gs{\mathfrak{s}}
\def\gt{\mathfrak{t}}
\def\gu{\mathfrak{u}}
\def\gv{\mathfrak{v}}
\def\gw{\mathfrak{w}}

\def\gz{\mathfrak{z}}



\def\C{\mathbb{C}}

\def\E{\mathbb{E}}
\def\F{\mathbb{F}}

\def\H{\mathbb{H}}

\def\O{\mathbb{O}}

\def\R{\mathbb{R}}


\def\cH{\mathcal{H}}

\def\cO{\mathcal{O}}

\def\cS{\mathcal{S}}

\def\cU{\mathcal{U}}

\def\Im{{\rm Im}\,}
\def\Re{{\rm Re}\,}

\def\Ad{{\rm Ad}}

\def\Pf{{\rm Pf}}

\def\Ind{{\rm Ind\,}}

\def\tr{{\rm trace\,}}


\renewcommand{\thesection}{\arabic{section}}

\renewcommand{\thetable}{{\large \thesection.\arabic{equation}}}
\newtheorem{theorem}[equation]{Theorem}

\newtheorem{lemma}[equation]{Lemma}

\newtheorem{proposition}[equation]{Proposition}

\newtheorem{definition}[equation]{Definition}

\newtheorem{example}[equation]{Example}

\def\sideremark#1{\ifvmode\leavevmode\fi\vadjust{\vbox to0pt{\vss
 \hbox to 0pt{\hskip\hsize\hskip1em
\vbox{\hsize2cm\tiny\raggedright\pretolerance10000 
 \noindent #1\hfill}\hss}\vbox to8pt{\vfil}\vss}}} 

\begin{document}

\title{Unitary Representations, $L^2$ Dolbeault Cohomology, and 
Weakly Symmetric Pseudo--Riemannian Nilmanifolds}

\dedicatory{To the memory of Bert Kostant, a good friend and mathematical 
pioneer}

\author{Joseph A. Wolf}\thanks{Research partially supported by a Simons
Foundation grant}
\address{Department of Mathematics \\ University of California, Berkeley \\
	CA 94720--3840, U.S.A.} \email{jawolf@math.berkeley.edu}

\date{file ~/texdata/submitted/dolbeault-nilpotent/nil-flag.tex 
submitted 25 October 2018, some expository clarifications and typos 
corrected later, final edit 15 September 2019}

\subjclass[2010]{22E45, 43A80, 32M15, 53B30, 53B35}

\keywords{weakly symmetric space, pseudo--riemannian manifold, 
homogeneous manifold, Lorentz manifold, trans--Lorentz manifold}

\begin{abstract}
We combine recent developments on weakly symmetric pseudo--riemannian
nilmanifolds with with geometric methods for construction of unitary
representations on square integrable Dolbeault cohomology spaces.  This runs
parallel to construction of discrete series representations on spaces
of square integrable harmonic forms with values in holomorphic vector
bundles over flag domains.  Some special cases had been described
by Satake in 1971 and the author in 1975.  Here we develop a 
theory of pseudo--riemannian nilmanifolds of complex type.  They can be
viewed as the nilmanifold versions of flag domains.  We construct the
associated square integrable (modulo the center) representations on
holomorphic cohomology spaces over those domains and note that
there are enough such representations for the Plancherel and Fourier 
Inversion Formulae there.  Finally, we note that the most interesting
such spaces are weakly symmetric pseudo--riemannian nilmanifolds, so
we discuss that theory and give classifications for three 
basic families of weakly symmetric pseudo--riemannian nilmanifolds of complex 
type.
\end{abstract}

\maketitle

\section{Introduction}\label{sec1} 
\setcounter{equation}{0}

This paper records and expands on a surprising observation.  It has long been 
known that the standard tempered representations of semisimple Lie groups 
--- which are enough for the Plancherel and Fourier Inversion Formulae 
there --- can be realized on partially holomorphic cohomology spaces over 
flag domains.  See \cite{W1973} and \cite{W2018}.  Here we develop a 
theory of pseudo--riemannian nilmanifolds of complex type, a sort of
nilmanifold version of flag domains.  We then construct the
associated square integrable (modulo the center) representations on
holomorphic cohomology spaces over those domains and note that
there are enough such representations for the Plancherel and Fourier 
Inversion Formulae there.  Finally, we note that many of the interesting
such spaces are weakly symmetric pseudo--riemannian nilmanifolds, so
we discuss that theory and give classifications for three 
basic families of weakly symmetric pseudo--riemannian nilmanifolds of complex 
type. 

The Bott--Borel--Theorem of the 1950's \cite{B1957} gave complex 
geometric realizations for representations of compact Lie groups.
In the early 1960's
Kirillov described the unitary dual for nilpotent Lie groups in terms of
coadjoint orbits \cite{K1962}.  Kostant saw the correspondence between 
those two theories and developed a common generalization, {\sl geometric 
quantization}.  In the framework of 
geometric quantization, the Bott--Borel--Theorem uses totally complex 
polarizations and the Kirillov theory uses real polarizations.
On the other hand, the infinite dimensional irreducible unitary 
representations of Heisenberg groups have complex realizations, on
spaces of Hermite polynomials.  

The extension of Bott--Borel--Weil to noncompact groups, perhaps inspired
by Harish-Chandra's holomorphic discrete series, was made plausible when
Andreotti and Vesentini \cite{AV1965} initiated the study of square
integrable Dolbeault cohomology.  The extension to noncompact 
real semisimple Lie groups, then called the Langlands Conjecture, was
the realization of square integrable (discrete series) representations of 
semisimple Lie groups on square integrable $\overline{\partial}$ cohomology 
(and certain variations) for holomorphic hermitian vector bundles 
$\E \to D$ over flag domains.  It was carried out by a number of people; 
see  Harish-Chandra \cite{HC1965}, Narasimhan and Okamoto \cite{NO1970},
Schmid \cite{Sc1971, S1976}, Wolf \cite{W1973, W1974} and 
Wong \cite{W1995, W1999}. 

Here we address the corresponding problem for a class of connected
unimodular Lie groups of the form $G = N \rtimes H$ where $N$ is a 
two-step nilpotent Lie group and $H$ is a closed reductive subgroup of 
$G$.  In the language of geometric quantization, we are looking for 
representations of $N$ defined by totally complex polarizations and
their extension to $G$.  The first example is the case where
$N$ is the Heisenberg group of dimension $2n+1$ and $H = U(n)$, or
more generally $U(p,q)$ with $p+q=n$.  There, the Fock representations of
$N$ extend to $G$ without Mackey obstruction, for purely geometric
reasons \cite{W1975}.  Also in \cite{W1975}, this  leads to the Plancherel 
and Fourier Inversion Formulae for $G = N \rtimes H$ and for certain
similar semidirect product groups. 

These Heisenberg group examples belong to a much larger family, the 
weakly symmetric riemannian (and pseudo--riemannian) nilmanifolds, studied 
in \cite{WC2018}.  Many members of that larger family enjoy special 
properties that combine complex geometry and real analysis.  For example 
they inherit some curvature properties from \cite{W1964}.  To 
describe them we use the obvious decomposition
\begin{equation}\label{z-v}
\gn = \gz + \gv \text{ where $\gz$ is the center and $\Ad(H)\gv = \gv$.}
\end{equation}
Corresponding to (\ref{z-v}), we will use the following notation on
the dual spaces:
\begin{equation}\label{dual-z-v}
\gz^* \ni \zeta \leadsto \lambda_\zeta \in \gn^* \text{ by }
	\lambda_\zeta|_\gz = \zeta \text{ and } \lambda_\zeta|_\gv = 0,
	\text{\quad and \quad} \gn^* \ni \lambda \leadsto 
		\zeta_\lambda = \lambda|_\gz \in \gz^*.
\end{equation}	
The basic conditions with which we'll deal are
\begin{equation}\label{basic}
\begin{aligned}
& N \text{ has square integrable representations modulo its center, }\\
& \gn \text{ has an $\Ad(H)$--invariant symmetric bilinear form $b$
	for which $\gz \perp \gv$,} \\
& \text{the symmetric bilinear form $b$ has nondegenerate restriction 
	$b|_{\gv}$ to $\gv$, and} \\ 	
& \gv \text{ has a complex vector space structure $J$ with $b(Ju,Jv) = b(u,v)$
	for $u, v \in \gz$.}
\end{aligned}
\end{equation}
This will allow us to carry out the program
\begin{equation}\label{program}
\begin{aligned}
 &\text{(a) define a pseudo--K\"ahler structure on $D = \exp(\gv) = N/Z$}, \\
 &\text{(b) construct holomorphic line bundles $\E_\zeta \to D = N/Z$
	for almost every $\zeta \in \gz^*$},\\
&\text{(c) describe the corresponding representations 
	$\pi_{\lambda_\zeta} \in \widehat{N}$
	both on $L^2$ Dolbeault cohomology}\\
 &\text{\phantom{XXX} and on spaces of square integrable
	harmonic $\E_\zeta$--valued differential forms on $V$},\\
&\text{(d) use the underlying holomorphic structure to extend 
	$\pi_{\lambda_\zeta}$ to a linear (not projective)}\\
 &\text{\phantom{XXX} representation of the $H$--stabilizer of 
	$\zeta$, and }\\
 &\text{(e) use this explicit information for the Plancherel and Fourier 
	Inversion formulae for $G$}.
\end{aligned}
\end{equation}

In Section \ref{sec2} we review the algebraic and analytic structure of 
the nilpotent Lie groups $N$ that have irreducible square integrable unitary 
representations.  Most weakly symmetric pseudo--riemannian manifolds can
be viewed as group manifolds of that sort.  These square integrable
representations are the nilpotent group analogs of discrete series
representations of semisimple Lie groups.  We discuss their structure along
the lines of \cite{MW1973} and \cite{W2007}.  Those representations
are basic to our geometric considerations.

In Section \ref{sec3} we look at the domains $D = N/Z$ 
that satisfy (\ref{basic}).   We'll view them as nilpotent group
analogs of flag domains for semisimple (\cite{W1969}, \cite{FHW2005}) 
Lie groups.  We consider the circumstances under which we have invariant
almost complex structures and pseudo--K\"ahler structures on $N/Z$.
Those almost complex structures have constant coefficients in the coordinates
of $\gn/\gz$, so obviously they are integrable, and the
pseudo--k\" ahler structure comes out of geometric quantization theory.
The main point here is the construction of square integrable
Dolbeault cohomology spaces for homogeneous holomorphic vector bundles
over the domains $D$.  We follow the flag domain idea for $N/Z$ and associate
a $N$--homogeneous hermitian holomorphic line bundle $\E_\zeta$ to each
``nonsingular'' $\zeta \in \gz^*$.  We realize the associated
representation $\pi_{\lambda_\zeta}$ as the natural action of $N$ on a
square integrable Dolbeault cohomology space $H_2^{0,\ell}(D;\E_\zeta)$
where $\ell$ is the number of negative eigenvalues of a certain
hermitian form defined by $\zeta$.

In Section \ref{sec4} we extend the constructions of Section \ref{sec3} to
semidirect product groups $G = N\rtimes H$.  The model (which we discuss
later) is the case where $G/H$ is a weakly symmetric pseudo--riemannian
manifold of complex type.  We are especially interested in case of the 
the group $H$ of all automorphisms of $N$ that preserve our 
pseudo--K\"ahler structure on the domain $D = N/Z = G/HZ$, where
the $\E_\zeta \to D$ are $G$--homogeneous.  Those cases occur quite often
in the setting of weakly symmetric pseudo--riemannian manifolds.  All
the ingredients, in the geometric construction of $\pi_{\lambda_\zeta}$ and
$H_2^{0,\ell}(D;\E_\zeta)$, are invariant under the $H$--stabilizer 
$H_\zeta$ of $\zeta$, so $\pi_{\lambda_\zeta}$ extends naturally from
$N$ to a representation $\pi'_{\lambda_\zeta}$ of $N \rtimes H_\zeta$.  
A key point
here is that the geometry lets us bypass the problem of the Mackey 
obstruction.  Then of course we have the induced representations 
$\pi_{\tau,\zeta} := 
\Ind_{NH_\zeta}^G(\tau\widehat{\otimes}\pi'_{\lambda_\zeta})$, 
$\tau \in \widehat{H_\zeta}$\,. 
Since the $\pi_{\lambda_\zeta}$ support the Plancherel measure of $N$, 
the Mackey little--group method shows that the $\pi_{\tau,\zeta}$ 
support the Plancherel measure of $G$.

In Section \ref{sec4} we also indicate the realization of the 
$\pi_{\tau,\zeta}$ both on square integrable partially holomorphic 
cohomology spaces and on  spaces of square integrable partially harmonic 
bundle--valued spinors.

The geometric construction (\ref{rep-g}) of the $\pi_{\tau,\zeta}$ is 
parallel to that of the standard tempered representations of real reductive Lie
groups (\cite{W1973}, or see \cite{W2018}).  The domain $D$
corresponds to a flag domain, $\pi_{\lambda_\zeta}$ corresponds to a relative
discrete series representation of the Levi component of a parabolic
subgroup, and the construction $\pi_{\tau,\zeta} 
= \Ind_{G_\zeta}^G(\tau \widehat{\otimes} \pi'_{\lambda_\zeta})$
corresponds to $L^2$ parabolic induction.  In both settings one can use
partially harmonic square integrable bundle--valued forms as in 
\cite{W1974}, instead of square integrable Dolbeault cohomology.

Finally, in Sections \ref{sec5} and \ref{sec6}, we extract examples from 
the theory of weakly symmetric pseudo--riemannian nilmanifolds, listing the 
holomorphic cases from \cite{WC2018} and recording the signatures
of invariant pseudo--K\"ahlerian metrics.   There, as in the semisimple
setting, the Dolbeault cohomology degree is the number of negative
eigenvalues of the invariant pseudo--K\"ahlerian metric.

In Section \ref{sec5} we review the notion of real form 
family $\{\{G_r/H_r\}\}$ of pseudo--riemannian weakly symmetric nilmanifolds
associated to a riemannian weakly symmetric nilmanifold $G_r/H_r$\,.  
That is considerably more delicate than the semisimple case \cite{CW2017}.
We introduce the notion of ``complex type'' for pseudo--riemannian weakly 
symmetric nilmanifolds.
The pseudo--riemannian weakly symmetric nilmanifolds of complex type 
satisfy the Satake conditions (\ref{A1}) and (\ref{A2}), so the program
(\ref{program}) goes through for them.  Table \ref{heis-comm} lists the
pseudo--riemannian weakly symmetric nilmanifolds of complex type for which
$N$ is a Heisenberg group.  It also shows that we need a maximality
condition in order to have a usable listing for more general $N$, and
Table \ref{max-irred} lists the maximal pseudo--riemannian weakly
symmetric nilmanifolds of complex type for which $H$ acts irreducibly
on $\gn/\gz$.

In Section \ref{sec6} we consider the complete classification of maximal 
pseudo--riemannian weakly symmetric nilmanifolds of complex type.  That 
is necessarily combinatorial and based on a further listing of indecomposable 
maximal pseudo--riemannian weakly symmetric spaces for which $H$ acts
reducibly on $\gn/\gz$ and satisfies some technical conditions.  That
is carried out in Table \ref{indecomp}.

In order to reduce clutter in the notation we will denote
\begin{equation}\label{clutter}
	\pi_\zeta := \pi_{\lambda_\zeta} \text{ for all } \zeta \in \gz^*.
\end{equation}
This notation will be justified by Theorems \ref{mw-nilp} and 
\ref{planch-conc} below.

\section{Square Integrable Representations} \label{sec2}
\setcounter{equation}{0}

In this Section we collect some information on square integrable 
representations for nilpotent Lie groups.  The references are \cite{MW1973} and
\cite{W2007}, with a summary in \cite[Section 2]{W2016}.  These are the
representations and nilpotent groups to which our results apply.  The groups
considered in this note are all of Type I with a countable basis for 
open sets, so there are no measure--theoretic complications.

First, if $B$ is a unimodular Lie group with center $Z$ and 
$\pi \in \widehat{B}$ we have the {\bf central character} 
$\chi_\pi \in \widehat{Z}$ defined by $\pi(z) = \chi_\pi(x)\cdot 1$
for $z \in Z$.  Given $u$ and $v$ in the representation space $\cH_\pi$
we have the {\bf matrix coefficient} or {\bf coefficient function} 
$f_{u,v}: x \mapsto \langle u, \pi(x)v \rangle$,
and $|f_{u,v}|$ is a well defined function on $B/Z$.  Fix Haar measures 
$\mu_B$ on $B$, $\mu_Z$ on $Z$ and $\mu_{B/Z}$ on $B/Z$ such that 
$d\mu_B = d\mu_Z \, d\mu_{B/Z}$\,.  Then these conditions are equivalent:
\begin{equation}\label{sq}
\begin{aligned}
&\text{{\rm (1)} There exist nonzero $u,v \in \cH_\pi$ with 
	$|f_{u,v}| \in L^2(B/Z)$.}\\
&\text{{\rm (2)} $|f_{u,v}| \in L^2(B/Z)$ for all $u,v \in \cH_\pi$.}\\
&\text{{\rm (3)}
$\pi$ is a discrete summand of the representation $\Ind_Z^B(\chi_\pi)$.}
\end{aligned}
\end{equation}

\noindent When those conditions are satisfied for
$\pi \in \widehat{B}$ we say that $\pi$ is {\bf square integrable}
(modulo $Z$).  Then there is a number $\deg \pi > 0$, called the
{\bf formal degree} of $\pi$,
such that
\begin{equation}
\int_{G/Z} f_{u,v}(x)\overline{f_{u',v'}(x)}d\mu_{G/Z}(xZ)
        = \tfrac{1}{\deg \pi} \langle u,u' \rangle
                \overline{\langle v,v'\rangle}
\end{equation}
for all $u,u',v,v' \in \cH_\pi$\,.
If $\pi_1, \pi_2 \in \widehat{G}$ are inequivalent and satisfy (\ref{sq}),
and if $\chi_{\pi_1} = \chi_{\pi_2}$, then
\begin{equation}
\int_{G/Z} \langle u, \pi_1(x)v \rangle 
        \overline{\langle u', \pi_2(x)v' \rangle}d\mu_{G/Z}(xZ) = 0
\end{equation}
for all $u,v \in \cH_{\pi_1}$ and all $u', v' \in \cH_{\pi_2}$\,.

The main results of \cite{MW1973} shows exactly how this works for
nilpotent Lie groups:

\begin{theorem}  \label{mw-nilp}
Let $N$ be a connected simply connected nilpotent Lie group with center $Z$,
$\gn$ and $\gz$ their Lie algebras, and $\gn^*$ the linear dual space
of $\gn$.  Let $\lambda \in \gn^*$ and let $\pi_\lambda$ denote the
irreducible unitary representation attached to the coadjoint orbit 
$\Ad^*(N)\lambda$ by
the Kirillov theory {\rm \cite{K1962}}.  Then the following conditions are
equivalent.

{\rm (1)} $\pi_\lambda$ satisfies the conditions of {\rm (\ref{sq})}.

{\rm (2)} The coadjoint orbit $\Ad^*(N)\lambda = \{\nu \in \gn^* \mid 
        \nu|_\gz = \lambda|_\gz\}$.

{\rm (3)} The bilinear form $b_\lambda(x,y) = \lambda([x,y])$ on $\gn/\gz$
        is nondegenerate.

\noindent
The Pfaffian $\Pf(b_\lambda)$ is a polynomial function
$P(\lambda|_{\gz})$ on $\gz^*$.  The set of equivalence classes, of 
representations
$\pi_\lambda$ for which these conditions hold, is parameterized by the set
$\{\zeta \in \gz^* \mid P(\zeta) \ne 0\}$ $($which is empty or Zariski open
in $\gz^*)$.
\end{theorem}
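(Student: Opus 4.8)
The plan is to establish the equivalences $(1)\Leftrightarrow(2)\Leftrightarrow(3)$ by working through the Kirillov picture, and the cleanest route is to prove $(2)\Leftrightarrow(3)$ first (pure linear algebra on the coadjoint orbit) and then $(2)\Leftrightarrow(1)$ (the square-integrability criterion of (\ref{sq})). First I would recall from Kirillov theory that the coadjoint orbit $\Ad^*(N)\lambda$ is the set of $\nu$ obtained by the coadjoint action, and that its tangent space at $\lambda$ is the annihilator of the radical of $b_\lambda$. The radical of the form $b_\lambda(x,y)=\lambda([x,y])$ is precisely $\gr_\lambda = \{x \in \gn \mid \lambda([x,\gn])=0\}$, which always contains $\gz$. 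For $(3)\Rightarrow(2)$: if $b_\lambda$ is nondegenerate on $\gn/\gz$, then $\gr_\lambda = \gz$, so the orbit has dimension $\dim\gn - \dim\gz$ and, since the orbit is an affine subspace determined by fixing $\lambda|_\gz$, it must fill out the entire affine set $\{\nu \mid \nu|_\gz = \lambda|_\gz\}$. For $(2)\Rightarrow(3)$, the converse dimension count forces $\gr_\lambda = \gz$, which is exactly nondegeneracy of $b_\lambda$ on $\gn/\gz$.

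For $(2)\Leftrightarrow(1)$ I would use condition (3) of (\ref{sq}), namely that $\pi_\lambda$ is a discrete summand of $\Ind_Z^N(\chi_{\pi_\lambda})$. The central character of $\pi_\lambda$ is determined by $\lambda|_\gz$, and by the Kirillov orbit-method description of induced representations, the decomposition of $\Ind_Z^N(\chi)$ into irreducibles is governed by which orbits project to the fixed central datum. The key point is that $\pi_\lambda$ occurs \emph{discretely} exactly when its orbit is the unique (hence ``isolated'') orbit lying over $\lambda|_\gz$, which happens precisely when that orbit is all of $\{\nu \mid \nu|_\gz = \lambda|_\gz\}$ --- i.e. condition (2). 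In the nilpotent setting this is where I expect to lean hardest on \cite{MW1973}: one must show that the flatness of the orbit over $\gz^*$ translates into the square-integrability of matrix coefficients modulo $Z$, via an explicit model of $\pi_\lambda$ (e.g. a polarizing subalgebra and the associated $L^2$ realization) and a direct estimate on $|f_{u,v}|$ over $N/Z$. The hard part will be this analytic translation, showing that the geometric condition ``orbit is an affine subspace over the center'' yields genuine $L^2(N/Z)$ membership of coefficient functions rather than merely temperedness.

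For the final assertion about the Pfaffian, I would observe that $b_\lambda$ is an alternating bilinear form on the even-dimensional space $\gn/\gz$ (even-dimensional exactly when nondegenerate), so its Pfaffian $\Pf(b_\lambda)$ is defined and $\Pf(b_\lambda)^2 = \det(b_\lambda)$. Since the matrix entries of $b_\lambda$ in a fixed basis are the linear functions $x,y \mapsto \lambda([x,y])$, and $[x,y]\in\gz$ for a two-step (or general) nilpotent algebra modulo the relevant filtration, these entries depend only on $\lambda|_\gz=\zeta$ and depend \emph{linearly} on $\zeta$. Hence $\Pf(b_\lambda)=P(\zeta)$ is a polynomial on $\gz^*$, homogeneous of degree $\tfrac12\dim(\gn/\gz)$. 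Condition (3) then holds iff $P(\zeta)\neq 0$, so the equivalence classes of square-integrable $\pi_\lambda$ are parameterized by $\{\zeta \in \gz^* \mid P(\zeta)\neq 0\}$; this set is the complement of the zero locus of a single polynomial, hence Zariski open, and it is either empty (if $P\equiv 0$) or dense. Finally I would note that distinct $\zeta$ with $P(\zeta)\neq 0$ give inequivalent representations because they already differ in central character, completing the parameterization.
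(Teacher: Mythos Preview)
The paper does not prove this theorem.  It is quoted as an external result: the statement is introduced with ``The main results of \cite{MW1973} shows exactly how this works for nilpotent Lie groups,'' and no argument follows.  So there is no proof in the paper to compare your proposal against; you are attempting to reconstruct the proof of a cited theorem.

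As to the sketch itself, your treatment of $(2)\Leftrightarrow(3)$ via the radical of $b_\lambda$ and the dimension of the coadjoint orbit is correct, and your plan for $(1)\Leftrightarrow(2)$ correctly identifies the analytic step (matrix coefficients in $L^2(N/Z)$) as the genuine content of \cite{MW1973}.  There is, however, a real gap in your Pfaffian paragraph.  You assert that ``$[x,y]\in\gz$ for a two-step (or general) nilpotent algebra modulo the relevant filtration,'' and conclude that the matrix entries $\lambda([e_i,e_j])$ of $b_\lambda$ depend only on $\lambda|_\gz$.  This is true for two-step nilpotent $\gn$ (where $[\gn,\gn]\subset\gz$), but it is \emph{false} in general: the individual entries of $b_\lambda$ can depend on $\lambda|_{[\gn,\gn]}$, which need not lie in $\gz^*$.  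What is true --- and what the theorem asserts --- is that the \emph{Pfaffian} $\Pf(b_\lambda)$ nonetheless depends only on $\lambda|_\gz$, even though its individual entries do not.  The argument for this in \cite{MW1973} uses that $\Pf(b_\lambda)$ is an $\Ad^*(N)$-invariant polynomial on $\gn^*$ (since $\Ad(n)$ acts on $\gn/\gz$ with determinant $1$), together with the structure of such invariants; your parenthetical ``modulo the relevant filtration'' does not supply this.  Since the present paper only \emph{applies} the theorem (and in fact the examples in Sections~\ref{sec5}--\ref{sec6} are two-step, where your argument is fine), this subtlety does not affect anything downstream, but it is a genuine hole if you intend to prove the theorem in the stated generality.
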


We will say that the connected simply connected nilpotent Lie group $N$ is
{\bf square integrable} if it has a square integrable irreducible unitary
representation, in other words if there exists $\lambda \in \gn^*$ such that
$P(\lambda|_{\gz}) \ne 0$.  We remark that the  group $N$ is 
square integrable if and 
only if the universal enveloping algebra $\cU(\gz)$ is the center of $\cU(\gn)$
\cite{MW1973}.  

Recall that if $\zeta \in \gz^*$ then (\ref{clutter}) 
$\pi_\zeta$ denotes the $\pi_\lambda$ for which $\lambda|_\gv = 0$ and
$\lambda|_\gz = \zeta$.

\begin{theorem} \label{planch-conc}
Let $N$ be a square integrable connected simply connected nilpotent Lie group
with center $Z$.  Then Plancherel measure on $\widehat{N}$ is
concentrated on $\{\pi_\zeta \mid P(\zeta) \ne 0\}$,  
where it is a positive multiple of the absolutely continuous measure 
$|P(\zeta)|d\zeta$.
The formal degree $\deg \pi_\zeta = |P(\zeta)|$.
\end{theorem}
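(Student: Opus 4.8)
The plan is to combine the Kirillov orbit picture with the abstract Plancherel theory for unimodular Type I groups, using Theorem \ref{mw-nilp} to pin down exactly which representations carry the measure. First I would recall that for a connected simply connected nilpotent Lie group $N$, the generic coadjoint orbits are determined by their restriction to $\gz^*$, and that the Kirillov correspondence identifies $\widehat{N}$ (up to a null set) with the space of coadjoint orbits. By Theorem \ref{mw-nilp}, the condition $P(\zeta) \ne 0$ for $\zeta = \lambda|_\gz$ is exactly the condition that $b_\lambda$ be nondegenerate on $\gn/\gz$, equivalently that the orbit $\Ad^*(N)\lambda$ be the full affine subspace $\{\nu \mid \nu|_\gz = \zeta\}$. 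Thus the set of orbits with $P(\zeta) \ne 0$ is parameterized by the Zariski-open subset $\{\zeta \in \gz^* \mid P(\zeta) \ne 0\}$, and its complement $\{P(\zeta) = 0\}$ is a proper algebraic subvariety of $\gz^*$, hence Lebesgue-null. The first conclusion — that Plancherel measure is concentrated on $\{\pi_\zeta \mid P(\zeta) \ne 0\}$ — then follows because the orbits that fail nondegeneracy sit over a measure-zero set of central characters and contribute nothing to the Plancherel decomposition.

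Next I would identify the measure itself. The standard approach is to push forward Lebesgue measure on $\gn^*$ under the orbit-space projection, using the symplectic (Liouville) volume on each coadjoint orbit. For a square integrable (mod center) orbit over $\zeta$, the orbit is the affine space $\zeta + \gv^*$ of real dimension $\dim(\gn/\gz) = 2m$, and its Liouville volume form is $\tfrac{1}{m!}\omega^m$ where $\omega$ is the Kirillov--Kostant symplectic form built from $b_\lambda$. The point is that the Liouville density relative to the flat measure on $\zeta + \gv^*$ is exactly $|\Pf(b_\lambda)| = |P(\zeta)|$. Disintegrating Lebesgue measure $d\lambda$ on $\gn^*$ as $d\zeta$ along the base times the fiber measure, and comparing with the Liouville normalization, yields that the Plancherel measure on the parameter space $\{P(\zeta) \ne 0\}$ is a constant multiple of $|P(\zeta)|\,d\zeta$. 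This is the Kirillov--Pukanszky formula in the square integrable setting, specialized via the explicit Pfaffian computation already recorded in Theorem \ref{mw-nilp}.

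Finally, for the formal degree I would appeal directly to the defining orthogonality relations in (\ref{sq}) together with the condition (\ref{sq})(3) that $\pi_\zeta$ is a discrete summand of $\Ind_Z^N(\chi_{\pi_\zeta})$. The cleanest route is to match two expressions for the same object: the Plancherel density and the formal degree are linked by the general principle that, for a representation appearing discretely in $\Ind_Z^N(\chi)$ with the induced Plancherel normalization of Haar measures fixed by $d\mu_N = d\mu_Z\,d\mu_{N/Z}$, the Plancherel weight of $\pi_\zeta$ equals its formal degree $\deg \pi_\zeta$. Combined with the previous paragraph this forces $\deg \pi_\zeta = |P(\zeta)|$ under the chosen normalizations. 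I expect the main obstacle to be bookkeeping of normalization constants: the overall positive multiple in the Plancherel measure and the precise value of $\deg \pi_\zeta$ both depend on the compatible choice of Haar measures $\mu_N, \mu_Z, \mu_{N/Z}$ and on the normalization of the symplectic volume on the orbits, so the substance of the proof is verifying that a single consistent choice makes the Liouville density and the formal degree agree on the nose rather than merely up to an undetermined scalar. Once the Pfaffian identification of the Liouville density is in hand, the equivalences of Theorem \ref{mw-nilp} do the conceptual work and the remainder is the standard Kirillov--Moore--Wolf Plancherel computation.
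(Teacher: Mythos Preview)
The paper does not actually give a proof of Theorem~\ref{planch-conc}: Section~\ref{sec2} is explicitly a review, and this theorem is quoted from \cite{MW1973} (with exposition in \cite{W2007}) without argument. Your sketch is essentially the Moore--Wolf argument from \cite{MW1973}: parametrize the generic coadjoint orbits by $\{\zeta\in\gz^*\mid P(\zeta)\ne 0\}$ via Theorem~\ref{mw-nilp}, observe that the complement is Zariski-closed hence Lebesgue-null, and then compute the Plancherel density and formal degree by comparing Lebesgue measure on $\gn^*$ with the Liouville measure on orbits, which produces the Pfaffian factor $|P(\zeta)|$. So there is nothing to contrast---you have reconstructed the intended proof, and your caveat about normalization constants is exactly right: the ``positive multiple'' in the statement and the equality $\deg\pi_\zeta=|P(\zeta)|$ both depend on a compatible choice of Haar measures on $N$, $Z$, $N/Z$ and of Lebesgue measure on $\gz^*$.
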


Given $\zeta \in \gz^*$ with $P(\zeta) \ne 0$ and a Schwartz class
function $f \in \cS(N)$, $\cO(\zeta)$ denotes the co-adjoint orbit
$\Ad^*(N)(\lambda_\zeta) = \zeta + \gz^\perp$\,, 
$f_\zeta = (f\cdot \exp)|_{\cO(\zeta)}$ 
and $\widehat{f_\zeta}$ is the
Fourier transform of $f_\zeta$ on $\cO(\zeta)$.

\begin{theorem} \label{sq-inv}
Let $N$ be a square integrable connected simply connected nilpotent Lie group
with center $Z$.  Let $f \in \cS(N)$.
If $\zeta \in \gz^*$ with $P(\zeta) \ne 0$ then the distribution
character of $\pi_\zeta$ is given by
\begin{equation}\label{char-sq}
\Theta_{\pi_\zeta}(f) = 
        \tr \int_N f(x) \pi_\zeta(x) d\mu_G(x) =
        c^{-1}|P(\zeta)|^{-1}\int_{\nu \in \cO(\zeta)} \widehat{f_\zeta} \,d\nu
\end{equation}
where $c = d!2^d \text{ and } d = \dim(\gn / \gz)/2$\,, and $d\nu$ is
ordinary Lebesgue measure on the affine space $\cO(\zeta)$\,.
The Fourier Inversion formula for $N$ is
\begin{equation}\label{fi-sq}
	f(x) = c\int_{\gz^*} \Theta_{\pi_\zeta}(r_xf)
                |P(\zeta)|\, d\zeta \text{ where }
        (r_xf)(y) = f(yx) \text{ (right translate)}.
\end{equation}
\end{theorem}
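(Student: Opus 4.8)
The plan is to obtain the character formula (\ref{char-sq}) as the specialization of the Kirillov character formula to a square integrable coadjoint orbit, and then to derive the Fourier inversion formula (\ref{fi-sq}) by inserting that character formula into the abstract Plancherel theorem and using Theorem \ref{planch-conc} to identify the Plancherel measure. First I would invoke the Kirillov character formula \cite{K1962, MW1973} for the connected simply connected nilpotent group $N$: for $f \in \cS(N)$ and $\lambda \in \gn^*$, with $f_\lambda = (f\cdot\exp)|_{\cO(\lambda)}$,
\[
\Theta_{\pi_\lambda}(f) = \int_{\cO(\lambda)} \widehat{f_\lambda}(\nu)\, d\beta_{\cO(\lambda)}(\nu),
\]
where $d\beta_{\cO(\lambda)}$ is the canonical Liouville measure $\omega^d/d!$ attached to the Kirillov--Kostant--Souriau symplectic form $\omega$ on the orbit. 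For $\lambda = \lambda_\zeta$ with $P(\zeta)\neq 0$, Theorem \ref{mw-nilp} tells us the orbit is the affine space $\cO(\zeta) = \zeta + \gz^\perp$ and the stabilizer of $\lambda_\zeta$ is exactly $Z$, so $\omega$ is given everywhere by the translation--invariant form $b_{\lambda_\zeta}(x,y) = \lambda_\zeta([x,y])$ on $\gn/\gz \cong \gz^\perp$.

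The main computation is then the comparison of $d\beta_{\cO(\zeta)}$ with ordinary Lebesgue measure $d\nu$ on $\cO(\zeta)$. Identifying the tangent space of the orbit with $\gz^\perp$ through $x \mapsto \mathrm{ad}^*_x\lambda_\zeta = -b_{\lambda_\zeta}(x,\cdot)$, the matrix of $\omega$ in the linear coordinates underlying $d\nu$ is $\pm B^{-1}$, where $B$ is the antisymmetric invertible matrix of $b_{\lambda_\zeta}$; hence $\omega^d/d! = |\Pf(B)|^{-1}\, d\nu = |P(\zeta)|^{-1}\, d\nu$, using $\Pf(b_{\lambda_\zeta}) = P(\zeta)$ from Theorem \ref{mw-nilp}. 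Collecting the normalizing factors --- the $d!$ from $\omega^d/d!$, and a factor $2^d$ from the Fourier--transform convention together with the reduction of $b_{\lambda_\zeta}$ to symplectic normal form --- produces the stated constant $c = d!\,2^d$ and yields (\ref{char-sq}).

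For the Fourier inversion formula I would start from the abstract Plancherel theorem, which is available because $N$ is second countable, unimodular and of Type I: for $f \in \cS(N)$ and $x \in N$, since $(r_xf)(e) = f(x)$,
\[
f(x) = (r_xf)(e) = \int_{\widehat N} \Theta_\pi(r_x f)\, d\mu_{\mathrm{Pl}}(\pi).
\]
Theorem \ref{planch-conc} identifies the Plancherel measure: it is concentrated on $\{\pi_\zeta \mid P(\zeta)\neq 0\}$ and equals a positive multiple of $|P(\zeta)|\,d\zeta$, the multiple being fixed by $\deg\pi_\zeta = |P(\zeta)|$. Substituting this description of $d\mu_{\mathrm{Pl}}$ gives (\ref{fi-sq}) with the same normalizing constant $c$.

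Finally, the consistency of the two formulas is what pins $c$ down unambiguously, and I would carry this out as both the last step and a sanity check. Inserting (\ref{char-sq}) into (\ref{fi-sq}), the factors $|P(\zeta)|$ and $c^{-1}|P(\zeta)|^{-1}$ cancel against the outer $c\,|P(\zeta)|$, leaving
\[
f(x) = \int_{\gz^*}\Big(\int_{\cO(\zeta)} \widehat{(r_xf)_\zeta}(\nu)\,d\nu\Big)\,d\zeta.
\]
Since the affine orbits $\cO(\zeta) = \zeta + \gz^\perp$ foliate $\gn^*$ as $\zeta$ ranges over $\gz^*$, this iterated integral is exactly the Euclidean Fourier inversion integral for $(r_xf)\cdot\exp$ over $\gn^*$ evaluated at $0\in\gn$, which returns $(r_xf)(e) = f(x)$. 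The main obstacle is therefore not any single analytic estimate but the uniform normalization of every measure in play --- the Haar measures on $N$, $Z$, $N/Z$; the Lebesgue measures on $\gz^*$, on $\gn^*$, and on each orbit $\cO(\zeta)$; the Fourier transform; and the symplectic volume --- so that the single constant $c = d!\,2^d$ renders (\ref{char-sq}) and (\ref{fi-sq}) simultaneously correct.
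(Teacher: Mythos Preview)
The paper does not prove Theorem \ref{sq-inv}; it is stated as background material collected from \cite{MW1973} and \cite{W2007} (see the opening paragraph of Section \ref{sec2}). Your sketch is essentially the argument given in those references: specialize the Kirillov character formula to a flat orbit $\cO(\zeta) = \zeta + \gz^\perp$, compare the Liouville measure $\omega^d/d!$ with Lebesgue measure via the Pfaffian $P(\zeta)$, and then feed the result into the abstract Plancherel theorem using Theorem \ref{planch-conc}. So there is nothing to contrast --- you have reconstructed the standard proof that the paper is merely quoting.

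One small caution: your handling of the constant $c = d!\,2^d$ is a bit hand-wavy. You attribute the $2^d$ to ``the Fourier--transform convention together with the reduction of $b_{\lambda_\zeta}$ to symplectic normal form,'' but in fact the precise value of $c$ depends on a coordinated choice of all the normalizations you list at the end, and different sources make different choices. Your consistency check (inserting (\ref{char-sq}) into (\ref{fi-sq}) and recovering Euclidean Fourier inversion on $\gn$) is the right way to pin $c$ down, and it is good that you recognize this as the actual content; but as written the argument does not quite \emph{derive} $c = d!\,2^d$ so much as assert it and then verify compatibility. If you were writing this up in full you would want to fix one explicit normalization of Haar measure and Fourier transform and trace the constant through.
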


\section{Holomorphic Line Bundles over Domains $N/Z$}\label{sec3}
\setcounter{equation}{0}

Let $N$ be a connected, simply connected, nilpotent Lie group.  Let
$Z$ denote the center of $N$.  Their Lie algebras satisfy 
$\gn = \gz + \gv$ where $\gv$ is a vector space complement to $\gz$
in $\gn$.  Let $H$ be a reductive group of automorphisms on $N$.  
Consider the semidirect product
\begin{equation}\label{H}
G = N \rtimes H, \text{ so } \gg = \gz + \gv + \gh.
\end{equation}
Then automatically $\Ad(H)\gz = \gz$.  We make our choice of $\gv$ 
so that $\Ad(H)\gv = \gv$.
Later we will impose further conditions on these Lie algebras.  For
the moment we only require Satake's conditions.  

First, we assume that $\gv$ has an $\Ad(H)$--invariant complex structure $J$
whose $(\pm i)$ eigenspaces $\gv_\pm$ satisfy
\begin{equation}\label{A1}
[\gv_+,\gv_+] \subset \gv_+ + \gz_\C \text{ and } 
	[\gv_-,\gv_-] \subset \gv_- + \gz_\C
\end{equation}
Then $[\gh_\C, \gv_\pm] \subset \gv_\pm$\,, so each $\gh_\C + \gv_\pm + \gz_\C$
is a subalgebra of $\gg_\C$, very much like a parabolic,
with nilradical $\gv_\pm + \gz_\C$ and Levi component $\gh_\C$\,.

On the group level we suppose that $G$ is contained in its complexification
$G_\C$, so $N_\C = Z_\C V_\C$ where $V = \exp(\gv)$, and 
$G_\C = N_\C \rtimes H_\C$\,.  Let $V_\pm = \exp(\gv_\pm)$, so each $Z_\C V_\pm$
is a closed complex analytic subgroup of $N_\C$\,.  For convenience we denote
\begin{equation}\label{npm}
N_\pm = Z_\C V_\pm \text{ and } \gn_\pm = \gz_\C + \gv_\pm\,.
\end{equation}
\begin{lemma}\label{A2}
$D = G/HZ = G_\C/H_\C N_- \cong N_\C/ N_-$\,.
\end{lemma}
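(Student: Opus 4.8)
\emph{Proof strategy.} Both identifications will be realized as orbit maps of the two inclusions $N_\C \hookrightarrow G_\C$ and $G \hookrightarrow G_\C$, so I first record the group- and algebra-level facts they rest on. Since $J$ is $\Ad(H)$--invariant, $\Ad(H_\C)$ preserves the eigenspaces $\gv_\pm$; together with $\Ad(H_\C)\gz_\C = \gz_\C$ this gives $\Ad(H_\C)\gn_- = \gn_-$, so $H_\C$ normalizes the closed complex subgroup $N_- = \exp(\gn_-)$ and $H_\C N_- = N_- \rtimes H_\C$ is a subgroup of $G_\C$. Because $J$ is a genuine complex structure on the real space $\gv$, no nonzero real vector is a $(-i)$--eigenvector, so $\gv \cap \gv_- = 0$; comparing real dimensions then yields the real direct--sum decompositions
\begin{equation}\label{A2decomp}
\gn_\C = \gv \oplus \gn_-, \qquad \gn \cap \gn_- = \gz, \qquad \gg \cap (\gh_\C + \gn_-) = \gh + \gz .
\end{equation}

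The crux is the transitivity statement $N_\C = N N_-$. Here I would use that $\gn_-$ is a subalgebra (by (\ref{A1})) with closed subgroup $N_-$, while $\gv$ is a complementary subspace by (\ref{A2decomp}); for a simply connected nilpotent group this is exactly the situation in which the product map $\gv \times N_- \to N_\C$, $(X,m) \mapsto \exp(X)\, m$, is a diffeomorphism (the associated $\log$--map is the Baker--Campbell--Hausdorff polynomial, whose linear part is the identity, hence a polynomial automorphism of $\gn_\C$). In particular $N_\C = \exp(\gv)\,N_- \subseteq N N_- \subseteq N_\C$, so $N_\C = N N_-$. I expect this to be the only step with real content: since $\gv$ is merely a complement and not a subalgebra, $\exp(\gv)$ is not a subgroup, and one must invoke the product--of--exponentials diffeomorphism rather than any subgroup structure.

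The stabilizer computations are then formal. If $n \in N \cap N_-$, writing $n = \exp X = \exp Y$ with $X \in \gn$, $Y \in \gn_-$ and using that $\exp\colon \gn_\C \to N_\C$ is injective forces $X = Y \in \gn \cap \gn_- = \gz$, whence $N \cap N_- = Z$. Projecting $G_\C = N_\C \rtimes H_\C$ onto $H_\C$ gives $N_\C \cap H_\C N_- = N_-$, and for $nh \in G$ (with $n\in N$, $h\in H$) lying in $H_\C N_- = N_- H_\C$ the same projection forces $h \in H$ and $n \in N \cap N_- = Z$; since conversely $HZ \subseteq H_\C N_-$, we obtain $G \cap H_\C N_- = HZ$.

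It remains to assemble the orbit maps. The inclusion $N_\C \hookrightarrow G_\C$ induces an $N_\C$--equivariant holomorphic map $N_\C/N_- \to G_\C/H_\C N_-$, which is surjective because each coset has an $N_\C$--representative and injective because $N_\C \cap H_\C N_- = N_-$; being an equivariant holomorphic bijection of homogeneous spaces it is a biholomorphism, giving $G_\C/H_\C N_- \cong N_\C/N_-$. Likewise $G \hookrightarrow G_\C$ induces $G/HZ \to G_\C/H_\C N_-$; surjectivity is the chain $G\cdot H_\C N_- = N H_\C N_- = N N_- H_\C = N_\C H_\C = G_\C$ coming from $N_\C = N N_-$, and injectivity is $G \cap H_\C N_- = HZ$. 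Its differential at the base point is injective by the third identity in (\ref{A2decomp}), so this bijective immersion between manifolds of equal dimension is a diffeomorphism. Stringing the two together yields $D = G/HZ = G_\C/H_\C N_- \cong N_\C/N_-$.
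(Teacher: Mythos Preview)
Your argument is correct but follows a genuinely different route from the paper's. The paper checks $G \cap H_\C N_- = HZ$, notes that a dimension count makes the $G$--orbit of the base point open in $G_\C/H_\C N_- \cong N_\C/N_-$, and then invokes the classical fact that orbits of a unipotent group on an affine variety are closed (applied to the $N$--orbit, which equals the $G$--orbit); open and closed in a connected space forces the orbit to be everything. You instead prove transitivity by hand, establishing $N_\C = N\,N_-$ via the product--of--exponentials diffeomorphism $\gv \times N_- \to N_\C$ available for any closed connected subgroup of a simply connected nilpotent group with a complementary subspace. The paper's proof is shorter and more conceptual but imports the Rosenlicht--type closedness theorem; yours stays entirely within elementary nilpotent Lie theory and makes the global chart $\exp(\gv) \xrightarrow{\,\sim\,} D$ explicit, which is exactly what one wants when writing down sections of $\E_\zeta$ later. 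One small point: when you pass from ``differential injective at the base point'' to ``immersion'' you are tacitly using $G$--equivariance of the orbit map together with transitivity of $G$ on the source; that is routine, but a word would not hurt.
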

\begin{proof}
From (\ref{npm}) we have $G\cap H_\C N_\pm = HZ$ and $H_\C \cap N_\pm = \{1\}$.
By dimension, $GH_\C N_\pm$ is open in $G_\C$
and $G/HZ \simeq GH_\C N_\pm/H_\C N_\pm$ is open in $G_\C/H_\C N_\pm$\,.
Thus $D = G/HZ$ is an open $G$--orbit in the complex homogeneous
space $G_\C/H_\C N_- \cong N_\C/ N_-$\,.  But $G/HZ$ is an $N$--orbit, and
orbits of unipotent groups on affine manifolds are closed.  Thus
$D = G/HZ$ is a closed $G$--orbit $G_\C/H_\C N_- \cong N_\C/ N_-$\,.
The Lemma follows.
\end{proof}
We will work with $D$ in a way suggested by realization of discrete series 
of semisimple Lie groups representations over flag domains (\cite{NO1970},
\cite{Sc1971, S1976}, \cite{W1973, W1974}).  

\begin{lemma}\label{L0}
Let $\zeta \in \gz^*$.  Consider the symmetric bilinear form
$\beta_\zeta$ on $\gv$ given by $\beta_\zeta(u,v) = \lambda_\zeta([u,Jv])$.
Then $\beta_\zeta$ is nondegenerate if and only if $\lambda_\zeta$
satisfies the conditions of {\rm Theorem \ref{mw-nilp}}.
\end{lemma}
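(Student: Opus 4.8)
The plan is to reduce everything to Theorem \ref{mw-nilp} by observing that $\beta_\zeta$ differs from the form appearing in its condition (3) only by the invertible operator $J$. Write $b_{\lambda_\zeta}(x,y)=\lambda_\zeta([x,y])$ for that form. Since $\gz$ is central, $b_{\lambda_\zeta}$ annihilates $\gz$ in either argument and so descends to the alternating form on $\gn/\gz$ whose nondegeneracy is condition (3); under the identification $\gn/\gz\cong\gv$ it reads $b_{\lambda_\zeta}(u,v)=\lambda_\zeta([u,v])$ for $u,v\in\gv$. By the stated equivalence (1)$\Leftrightarrow$(3) of Theorem \ref{mw-nilp}, applied to $\lambda=\lambda_\zeta$, the assertion ``$\lambda_\zeta$ satisfies the conditions of Theorem \ref{mw-nilp}'' is identical to ``$b_{\lambda_\zeta}$ is nondegenerate on $\gv$''. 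So it suffices to prove that $\beta_\zeta$ is nondegenerate if and only if $b_{\lambda_\zeta}$ is.

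The key identity is $\beta_\zeta(u,v)=\lambda_\zeta([u,Jv])=b_{\lambda_\zeta}(u,Jv)$ for $u,v\in\gv$. Because $J$ is a complex structure, $J^2=-\mathrm{Id}$, so $J$ is a linear automorphism of $\gv$ and $\{Jv:v\in\gv\}=\gv$. Consequently, for fixed $u\in\gv$ the functional $v\mapsto\beta_\zeta(u,v)$ vanishes identically precisely when $w\mapsto b_{\lambda_\zeta}(u,w)$ does; that is, $\beta_\zeta$ and $b_{\lambda_\zeta}$ have the same left radical. As $b_{\lambda_\zeta}$ is alternating, its left and right radicals coincide, so $\beta_\zeta$ is nondegenerate if and only if $b_{\lambda_\zeta}$ is. Combined with the previous paragraph this is exactly the asserted equivalence, and I note that only the invertibility of $J$ is used here.

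It remains to account for the word \emph{symmetric} in the statement, which is where hypothesis (\ref{A1}) enters and which is logically separate from the nondegeneracy equivalence. Splitting $u=u_++u_-$ and $v=v_++v_-$ along $\gv_\C=\gv_++\gv_-$ and using $Jv=i v_+-i v_-$ together with the antisymmetry of $b_{\lambda_\zeta}$, one computes
\[
\beta_\zeta(u,v)-\beta_\zeta(v,u)=2i\bigl(b_{\lambda_\zeta}(u_+,v_+)-b_{\lambda_\zeta}(u_-,v_-)\bigr).
\]
Thus the symmetry of $\beta_\zeta$ is equivalent to the $J$-invariance $b_{\lambda_\zeta}(Ju,Jv)=b_{\lambda_\zeta}(u,v)$, and hence (since $\gv_+$ is stable under multiplication by $i$ and $b_{\lambda_\zeta}$ is $\C$-bilinear) to the isotropy $\lambda_\zeta([\gv_+,\gv_+])=0=\lambda_\zeta([\gv_-,\gv_-])$ of the subspaces $\gv_\pm$. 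This isotropy is the statement that $J$ is compatible with the symplectic form $b_{\lambda_\zeta}$, i.e.\ that $\gn_\pm=\gz_\C+\gv_\pm$ are polarizations subordinate to $\lambda_\zeta$: the subalgebra half of that requirement is exactly (\ref{A1}), and the isotropy half is the compatibility to be confirmed. The main (and only mild) obstacle in a fully rigorous write-up is checking this isotropy from the standing hypotheses; the nondegeneracy half, by contrast, is immediate from $J$ being invertible.
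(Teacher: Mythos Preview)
Your proof of the nondegeneracy equivalence is correct and is exactly the paper's approach: the paper's entire proof is the single sentence that $\beta_\zeta$ is nondegenerate if and only if $b_{\lambda_\zeta}$ is, which is your observation $\beta_\zeta(u,v)=b_{\lambda_\zeta}(u,Jv)$ together with the invertibility of $J$. Your third paragraph on the symmetry of $\beta_\zeta$ goes beyond what the paper proves (the paper asserts symmetry in the statement and addresses only the nondegeneracy in the proof), and as you yourself note, the isotropy $\lambda_\zeta([\gv_\pm,\gv_\pm])=0$ is not an immediate consequence of (\ref{A1}) alone since $\lambda_\zeta$ does not vanish on $\gz_\C$; it is an additional standing hypothesis implicit in the complex-type setup rather than something deduced in this lemma.
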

\begin{proof} In the notation of Theorem \ref{mw-nilp}, 
$\beta_\zeta$ is nondegenerate if and only if $b_{\lambda_\zeta}$ is
nondegenerate.
\end{proof}

Now let $\zeta \in \gz^*$ with $\beta_\zeta$ nondegenerate.   Then
\begin{equation}\label{L3}
	\chi_\zeta := \exp(i \lambda_\zeta|_{N_-})) 
	\text{ is a holomorphic character on } N_-\,.
\end{equation}
Using Lemma \ref{A2}, $\chi_\zeta$ defines 
\begin{equation}\label{L4}
\begin{aligned}
&\E_\zeta \to D = G/HZ = G_\C/H_\C N_- \cong N_\C/ N_-\,: \\
&\text{\quad $G_\C$--homogeneous holomorphic line bundle 
associated to } \chi_\zeta\,.
\end{aligned}
\end{equation}
Then we have
\begin{equation}\label{L5}
C_c^{p,q}(D;\E_\zeta): \text{ compactly supported $\E_\zeta$--valued 
	$(p,q)$ forms on } D
\end{equation}
Choose an $N$--invariant positive definite hermitian inner product $\gamma$ on
(the fibers of ) $\E_\zeta$\,.  Then we have the usual Hodge--Kodaira
orthocomplementation operator $\sharp$ sending $\E_\zeta$--valued
$(p,q)$ forms to $\E^*_\zeta$--valued $(n-p,n-q)$--forms, $n = \dim_\C D$,
and the formal adjoint $\overline{\partial}^* = -\sharp\partial\sharp$ of
$\overline{\partial}: C_c^{p,q}(D;\E_\zeta) \to C_c^{p,q+1}(D;\E_\zeta)$.
That gives us the Hodge--Kodaira--Laplace operator
\begin{equation}\label{L6}
\square = \overline{\partial}\, \overline{\partial}^* + 
	\overline{\partial}^* \overline{\partial}.
\end{equation}
The closure and the adjoint of $\square$ are equal, giving a self--adjoint
extension (which is also denoted $\square$) to 
\begin{equation}\label{L7}
L_2^{p,q}(D;\E_\zeta): \text{ square integrable $\E_\zeta$--valued
	$(p,q)$ forms on } D.
\end{equation}
That defines the space of square integrable harmonic 
$\E_\zeta$--valued $(p,q)$ 
forms on $D$:
\begin{equation}\label{L8}
H_2^{p,q}(D;\E_\zeta) = \{\omega \in L_2^{p,q}(D;\E_\zeta)
	\mid \square(\omega) = 0\}.
\end{equation}
By elliptic regularity of $\square$, $H_2^{p,q}(D;\E_\zeta)$ consists of
$C^\infty$ forms, in fact $C^\infty$ Schwartz class forms,
There the domain of $\square$ is all of $\cS^{p,q}(D;\E_\zeta)$, and
$H_2^{p,q}(D;\E_\zeta) \subset \cS^{p,q}(D;\E_\zeta)$.
As defined, $H_2^{p,q}(D;\E_\zeta)$ depends on the choice of 
positive definite hermitian inner product $\gamma$, but we can avoid that
issue using the orthogonal decomposition of Andreotti and Vesentini
\cite{AV1965}:
\begin{equation}\label{AV}
L_2^{p,q}(D;\E_\zeta) 
	= c\ell\,\,\overline{\partial}^* L_2^{p,q+1}(D;\E_\zeta)
	+ c\ell\,\, \overline{\partial}L_2^{p,q-1}(D;\E_\zeta) 
	+ H_2^{p,q}(D;\E_\zeta)
\end{equation}
where $c\ell$ denotes $L^2$ closure.
Thus we may (and do) identify $H_2^{p,q}(D;\E_\zeta)$ as a Hilbert space
completion of square integrable Dolbeault cohomology based on smooth Schwartz
class forms,
$$ 
H_2^{p,q}(D;\E_\zeta) \cong 
  {\rm Kernel\,} \bigl ( \overline{\partial}: \cS^{p,q}(D;\E_\zeta)
        \to \cS^{p,q+1}(D;\E_\zeta)\bigr ) /
  {\rm Image\,} \bigl (\overline{\partial}: \cS^{p,q-1}(D;\E_\zeta)
        \to \cS^{p,q}(D;\E_\zeta)\bigr ).
$$
The theorem of Satake \cite[Proposition 1]{Sa1971}and Okamoto (unpublished),
in this setting, can be reformulated as follows.  Consider the
hermitian form 
\begin{equation}\label{H1}
\gamma_\zeta(u,v) = \lambda_\zeta([u,Jv]) + i\, \lambda_\zeta([u,v]) 
	\text{ where } u,v \in \gv.
\end{equation}
Here $\lambda_\zeta([u,Jv]) = \beta_\zeta(u,v)$ is real symmetric on $\gv$ 
of some signature $(2k,2\ell)$ and $\lambda_\zeta([u,v])$ is antisymmetric,
so $\gamma_\zeta(u,v)$ is (complex) hermitian on $(\gv,J)$ of
corresponding signature $(k,\ell)$.  Thus $k$ is the dimension of any 
maximal positive definite subspace of $(\gv,J)$ and $\ell$ is the 
dimension of any maximal negative definite subspace.  Note that 
$\beta_\zeta$ is nondegenerate if and only if $P(\zeta) \ne 0$.
\begin{proposition}\label{SaO}
Let $\zeta \in \gz^*$ such that the hermitian form
$\gamma_\zeta(u,v)$ on $\gv$ is nondegenerate.
Let $n = \dim_\C(\gv,J)$ and let $(k,\ell)$ for the
signature of $\gamma_\zeta$ on $(\gv,J)$.   Then
$H^{0,q}_2(D;\E_\zeta) = 0$ for $q \ne \ell$ and the natural 
action of $N$ on $H^{0,\ell}_2(D;\E_\zeta)$ is the irreducible
unitary representation $\pi_\zeta$ with central character $\chi_\zeta$\,.
\end{proposition}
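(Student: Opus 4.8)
The plan is to identify the domain $D=N/Z$ with the nilpotent-group analogue of a flag domain and to reduce the vanishing and irreducibility statements to the Fock-model description of $\pi_\zeta$ together with the $L^2$ estimates of Andreotti--Vesentini. First I would observe that, because the almost complex structure $J$ on $\gv$ has constant coefficients, the Dolbeault complex on $D\cong N_\C/N_-$ can be written out explicitly in exponential coordinates. Holomorphic sections of $\E_\zeta$, in this model, are functions on $N$ transforming by $\chi_\zeta$ under $N_-$ and annihilated by the antiholomorphic vector fields coming from $\gv_-$. Because $\chi_\zeta=\exp(i\lambda_\zeta|_{N_-})$ is the character built from $\lambda_\zeta$, the resulting first-order system is, after the standard Gaussian gauge transformation, exactly the system whose $L^2$ solutions form the Fock space carrying $\pi_\zeta$. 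This is the concrete sense in which $D$ plays the role of a flag domain and $\pi_\zeta$ the role of a (relative) discrete series representation.

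The central computation is the Bochner--Kodaira--Nakano identity for $\square$ on $\E_\zeta$-valued $(0,q)$-forms. Since $\gamma_\zeta(u,v)=\lambda_\zeta([u,Jv])+i\,\lambda_\zeta([u,v])$ is, up to sign, the curvature form of the bundle $\E_\zeta$ with its $N$-invariant metric, the commutator term $[\,i\,\Theta(\E_\zeta),\Lambda\,]$ appearing in the identity acts on $(0,q)$-forms as a constant-coefficient Hermitian operator whose eigenvalues are sums of $q$ of the $n$ eigenvalues of $\gamma_\zeta$. I would diagonalize this term: on $(0,q)$-forms the curvature operator is bounded below by a positive constant precisely when $q\neq\ell$, where $\ell$ is the number of negative eigenvalues of $\gamma_\zeta$, because only at $q=\ell$ can one match the $\ell$ negative directions exactly and produce a zero eigenvalue. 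The nondegeneracy hypothesis on $\gamma_\zeta$ — equivalently $P(\zeta)\neq0$ by Lemma~\ref{L0} and Theorem~\ref{mw-nilp} — guarantees there is a spectral gap away from $q=\ell$. Combined with the Andreotti--Vesentini decomposition (\ref{AV}), which lets me work with the self-adjoint $\square$ on Schwartz-class forms rather than with a choice of metric, the strict positivity of $\square$ on $L_2^{0,q}$ for $q\neq\ell$ forces $H_2^{0,q}(D;\E_\zeta)=0$ there.

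For the degree $q=\ell$ I would show that $H_2^{0,\ell}(D;\E_\zeta)$ is nonzero and carries $\pi_\zeta$. The harmonic forms are the $L^2$ kernel of a Schrödinger-type operator with harmonic-oscillator potential determined by $\gamma_\zeta$; its ground-state sector, which sits in bidegree $(0,\ell)$ because the $\ell$ negative curvature directions must be saturated by antiholomorphic differentials, is nontrivial and finite-dimensional over the central directions, producing exactly the Fock space. The natural left action of $N$ commutes with $\square$ and preserves bidegree, so it acts on $H_2^{0,\ell}(D;\E_\zeta)$; by construction $Z$ acts through $\chi_\zeta$, so the representation has central character $\chi_\zeta$ and its coefficients are square integrable modulo $Z$. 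Matching the explicit realization with the Kirillov/Fock model then identifies this $N$-module with $\pi_\zeta$, and irreducibility follows because $\pi_\zeta$ is irreducible by Theorem~\ref{mw-nilp}. The main obstacle I anticipate is the analytic bookkeeping at $q=\ell$: establishing that the harmonic space is nonzero and exhausted by the oscillator ground state (rather than contaminated by non-$L^2$ or higher-bidegree contributions) requires care with the self-adjoint extension of $\square$ and with the Gaussian weight, and this is exactly the point that Satake's Proposition~1 and Okamoto's computation were designed to handle.
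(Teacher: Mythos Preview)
The paper does not supply a proof of this proposition: it is stated, without argument, as a reformulation in the present setting of Satake \cite[Proposition 1]{Sa1971} and an unpublished result of Okamoto (see the sentence immediately preceding the statement). So there is no ``paper's own proof'' against which to compare your proposal.

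That said, your outline is essentially the Satake argument: write the Dolbeault Laplacian in exponential coordinates on $D\cong N_\C/N_-$, apply the Bochner--Kodaira--Nakano identity so that the zeroth-order curvature term is governed by the eigenvalues of $\gamma_\zeta$, deduce strict positivity of $\square$ on $L_2^{0,q}$ for $q\ne\ell$ via the Andreotti--Vesentini decomposition, and identify the harmonic $(0,\ell)$-space with the Fock realization of $\pi_\zeta$. Your own closing sentence acknowledges this provenance. As an outline the sketch is sound; the genuine labor sits exactly where you place it, in the oscillator computation at $q=\ell$ and the control of the self-adjoint extension, and that is precisely what Satake's paper carries out in detail.
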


In view of Lemma \ref{L0} and the decomposition (\ref{AV}),
Proposition \ref{SaO} shows that the square integrable cohomology 
representation of $N$ corresponding to $\zeta$ is independent (up to 
unitary equivalence) of the
choice of $J$.  The cohomology degree, however, will
depend on choice of $J$, as seen in \cite{W1975}.  Further, since
$N$ satisfies the conditions of Theorem \ref{mw-nilp}, the Plancherel
measure and the Plancherel Formula and Fourier Inversion theorems
for $N$ are given by Theorems \ref{planch-conc} and \ref{sq-inv}.

\section{Extension to the Semidirect Product Group}\label{sec4}
\setcounter{equation}{0}
In this section we extend the results of Section \ref{sec3} from
the nilpotent group $N$ to the semidirect product group $G = N \rtimes H$.

\begin{lemma}\label{stab0}
Let $\zeta \in \gz^*$ with $P(\zeta) \ne 0$.  
Define $H_\zeta = \{h \in H \mid \Ad^*(h)\zeta = \zeta\}$ and
$G_\zeta = NH_\zeta$\,.  Then $G_\zeta$ is the subgroup of $G$
for which $\pi_\zeta\cdot \Ad(g)$ is equivalent to $\pi_\zeta$.
In other words, $G_\zeta$ is the Mackey little--group in $G$ for 
$\pi_\zeta$\,.
\end{lemma}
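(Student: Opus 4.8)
The plan is to reduce the statement to the Kirillov parametrization of Theorem \ref{mw-nilp}, which tells us that among the square integrable representations of $N$ the equivalence class of $\pi_\zeta$ is determined precisely by $\zeta \in \gz^*$. Concretely, $\pi_\zeta$ has central character $\chi_\zeta \in \widehat{Z}$; since $N$ is simply connected, $Z = \exp(\gz)$ and $\zeta \mapsto \chi_\zeta$ is injective on $\gz^*$, so $\pi_\zeta \cong \pi_{\zeta'}$ forces $\chi_\zeta = \chi_{\zeta'}$ and hence $\zeta = \zeta'$. This reduces the whole lemma to tracking how the parameter $\zeta$ transforms under $\Ad(g)$.

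First I would record the effect of an automorphism on Kirillov data: if $\alpha$ is an automorphism of $N$ with differential $d\alpha$, then $\pi_\lambda \circ \alpha \cong \pi_{\alpha^*\lambda}$ for the induced contragredient action $\alpha^*$ on $\gn^*$. For $g = nh \in G$ we have $\Ad(g) = \Ad(n)\circ \Ad(h)$. The inner factor $\Ad(n)$ fixes every coadjoint orbit, so $\pi_\zeta \circ \Ad(n) \cong \pi_\zeta$; and composing two equivalent representations with one and the same automorphism preserves equivalence, so $\pi_\zeta \circ \Ad(g) \cong \pi_\zeta \circ \Ad(h)$. Thus $N$ always lies in the little group, and the question is entirely about the $H$--factor.

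Next I would analyze the $H$--factor. Because $H$ acts by automorphisms preserving the center, $\Ad(h)$ preserves $\gz$ and its contragredient preserves $\gz^*$; moreover $\Ad(h)$ carries a nondegenerate $b_\lambda$ to a nondegenerate form on $\gn/\gz$, so it preserves condition (3) of Theorem \ref{mw-nilp}, whence $P(\zeta)\neq 0$ gives $P(\Ad^*(h)\zeta)\neq 0$. Since the class of a square integrable $\pi_\lambda$ depends only on $\lambda|_\gz$, we obtain $\pi_\zeta \circ \Ad(h) \cong \pi_{\Ad^*(h)\zeta}$, with $\Ad^*(h)$ the induced action on $\gz^*$. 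By the injectivity noted above, $\pi_\zeta \circ \Ad(h) \cong \pi_\zeta$ if and only if $\Ad^*(h)\zeta = \zeta$, that is, $h \in H_\zeta$.

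Assembling the pieces, $\pi_\zeta \circ \Ad(nh) \cong \pi_\zeta \circ \Ad(h) \cong \pi_{\Ad^*(h)\zeta}$ is equivalent to $\pi_\zeta$ exactly when $h \in H_\zeta$, i.e.\ when $nh \in N H_\zeta = G_\zeta$. This identifies $G_\zeta$ with the stabilizer of the class of $\pi_\zeta$ under the conjugation action of $G$ on $\widehat{N}$, which is the Mackey little group. The only genuine subtlety is bookkeeping of conventions --- making sure the action $h \mapsto \Ad^*(h)\zeta$ used to define $H_\zeta$ agrees with the contragredient action arising from $\pi_\zeta\circ\Ad(h)$ --- but since the two candidate actions (via $h$ and via $h^{-1}$) have the same stabilizer, this does not affect the conclusion.
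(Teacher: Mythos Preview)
Your argument is correct and follows essentially the same route as the paper: reduce to the $H$--factor (since $N$ acts trivially on the Kirillov orbit), then use Kirillov theory to see that $\pi_\zeta\cdot\Ad(h)\cong\pi_\zeta$ iff $\Ad^*(h)$ fixes $\zeta$. The only cosmetic difference is that in the ``only if'' direction the paper extracts $\zeta$ as the unique point of $\Ad^*(N)\lambda_\zeta\cap\gz^*$ (using $\Ad(H)\gv=\gv$ to know $\Ad^*(h)$ preserves $\gz^*\subset\gn^*$), whereas you extract it via the central character; both are immediate consequences of Theorem~\ref{mw-nilp}.
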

\begin{proof}
Let $h \in H$.  If $\Ad^*(h)\zeta = \zeta$ then 
$\pi_{\lambda_\zeta}\cdot \Ad(h)$ is equivalent to $\pi_{\lambda_\zeta}$ 
by Kirillov theory, in other words $\pi_\zeta\cdot\Ad(h)$ is equivalent
to $\pi_\zeta$.  If $\pi_\zeta\cdot \Ad(h)$ is equivalent to $\pi_\zeta$ then
$\Ad^*(N)(\lambda_\zeta\cdot \Ad(h)) = \Ad^*(N)(\lambda_\zeta)$.  As $H$ is 
reductive $\Ad^*(h)$ preserves $\gz^*$ and its complement $\gv^*$, so it
preserves $\Ad^*(N)(\lambda_\zeta) \cap \gz^* = \{\zeta\}$.  Now $H_\zeta$
is the $H$--stabilizer of $\pi_\zeta$, so $G_\zeta = NH_\zeta$
is the $G$--stabilizer.
\end{proof}

Since $H_\zeta$ preserves every ingredient in the construction
of $\pi_\zeta$ given by Proposition \ref{SaO} we now have
\begin{proposition}\label{stab1}
$\pi_\zeta$ extends to a
unitary representation $\pi'_\zeta$ of $G_\zeta$ on 
the representation space $\cH_\zeta$ of $\pi_\zeta$\,.
\end{proposition}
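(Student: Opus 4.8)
The plan is to upgrade the bare equivalence $\pi_\zeta \cong \pi_\zeta\cdot\Ad(h)$ of Lemma \ref{stab0} into a \emph{canonical} family of intertwining operators by letting $H_\zeta$ act directly on the geometric model of Proposition \ref{SaO}. The conceptual point is that a genuine (not merely element-by-element) action of $H_\zeta$ on the realization space $H_2^{0,\ell}(D;\E_\zeta)$ automatically trivializes the Mackey obstruction, so no cohomological obstruction to the extension can arise; what remains to be checked is only that each ingredient entering Proposition \ref{SaO} is literally $H_\zeta$-stable, and that the resulting action is unitary.

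First I would show that $H_\zeta$ (indeed all of $H$) acts on $D$ by biholomorphisms. Since $J$ is $\Ad(H)$-invariant, $H$ preserves the eigenspaces $\gv_\pm$, and it preserves $\gz_\C$; hence it preserves $\gn_- = \gz_\C + \gv_-$ and the subgroup $N_- = Z_\C V_-$. Via the identification $D \cong N_\C/N_-$ of Lemma \ref{A2}, the action of $H_\C$ on $N_\C$ by automorphisms descends to a holomorphic action on $D$ that fixes the base point $o = eHZ$ and normalizes the $N$-action. Next, because $\lambda_\zeta|_\gv = 0$ and $\Ad^*(h)\zeta = \zeta$ for $h \in H_\zeta$, the defining character $\chi_\zeta = \exp(i\lambda_\zeta|_{N_-})$ of (\ref{L3}) is $H_\zeta$-invariant; consequently $H_\zeta$ lifts to an action on $\E_\zeta \to D$ by holomorphic bundle automorphisms covering its action on $D$. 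These two facts produce a linear action $U$ of $H_\zeta$ on the $\E_\zeta$-valued forms $C_c^{0,q}(D;\E_\zeta)$; since each $U(h)$ is holomorphic it commutes with $\overline{\partial}$, so it passes to Dolbeault cohomology and hence to the harmonic model $H_2^{0,\ell}(D;\E_\zeta) = \cH_\zeta$. Because $U$ comes from an honest action of the group $H_\zeta$, it is multiplicative and the relation $U(h)\pi_\zeta(n)U(h)^{-1} = \pi_\zeta({}^h n)$ holds exactly; so $(n,h) \mapsto \pi_\zeta(n)U(h)$ is a representation $\pi'_\zeta$ of $G_\zeta = N \rtimes H_\zeta$ restricting to $\pi_\zeta$ on $N$.

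The main obstacle is unitarity: I must equip $\cH_\zeta$ with an $H_\zeta$-invariant Hilbert-space structure, which is delicate because $H_\zeta$ is in general noncompact. The strategy is to choose both the fiber metric $\gamma$ on $\E_\zeta$ and the volume density on $D$ to be $H_\zeta$-invariant, so that the $L^2$ inner product defining $L_2^{0,\ell}(D;\E_\zeta)$ is $H_\zeta$-invariant and $U$ restricts to a unitary operator on the harmonic subspace. Such invariant choices exist because $H_\zeta$ fixes $o$ and acts on the one-dimensional fiber $(\E_\zeta)_o$ through a \emph{unitary} character (this uses that $\zeta$ is real and $H_\zeta$-fixed), while unimodularity of $G$ furnishes the invariant density; spreading these over $D$ by the $N$-action yields the required invariant $L^2$ structure. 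Alternatively, one may invoke irreducibility of $\pi_\zeta$: the inner product on $\cH_\zeta$ is unique up to scale among those making $\pi_\zeta$ unitary, and $U(h)$ intertwines $\pi_\zeta$ with itself, so $U(h)^*U(h)$ is a positive scalar which multiplicativity together with the geometric normalization forces to be $1$. Either route shows $\pi'_\zeta$ is unitary, which completes the Proposition.
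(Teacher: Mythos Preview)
Your argument is correct and follows exactly the route the paper takes: the paper's entire proof is the single sentence ``Since $H_\zeta$ preserves every ingredient in the construction of $\pi_\zeta$ given by Proposition \ref{SaO},'' and you have simply unpacked what that sentence means. Your explicit treatment of unitarity (via $H_\zeta$-invariance of the symplectic form $b_{\lambda_\zeta}$, hence of the volume, together with the one-dimensionality of the fiber at $o$) fills in a point the paper leaves to the reader.
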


In view of Theorem \ref{planch-conc}, the Mackey little--group method
gives us
\begin{proposition}\label{supp-planch-g}
Plancherel measure on $\widehat{G}$ is concentrated on the representations
$$
\pi_{\tau,\zeta} = 
     \Ind_{G_\zeta}^G(\tau \widehat{\otimes} \pi'_\zeta)
\text{ where } \zeta \in \gz^* \text{ with } P(\zeta) \ne 0
\text{ and where } \tau \in \widehat{H_\zeta}\,.
$$
\end{proposition}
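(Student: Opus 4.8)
The plan is to derive the statement as an application of the Mackey little--group machine to the regular semidirect product $G = N \rtimes H$, transporting the support of Plancherel measure from $N$ to $G$ along induction. The starting point is induction in stages for the regular representation: since $N$ is a closed (normal) subgroup, $L^2(G) = \Ind_{\{e\}}^G \mathbf{1} \cong \Ind_N^G\bigl(\Ind_{\{e\}}^N \mathbf{1}\bigr) \cong \Ind_N^G L^2(N)$ as $G$--modules. Writing $\cU = \{\zeta \in \gz^* \mid P(\zeta) \ne 0\}$, Theorem \ref{planch-conc} gives the Plancherel decomposition of $L^2(N)$ concentrated on the square integrable family,
$$
L^2(N) \cong \int_{\cU}^\oplus \pi_\zeta \otimes \overline{\cH_\zeta}\; |P(\zeta)|\,d\zeta ,
$$
so the whole task reduces to decomposing the induced module $\Ind_N^G$ of this direct integral.

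Next I would verify the regularity hypotheses of the little--group method and organize the integral by $H$--orbits. By Lemma \ref{stab0} the action $\pi_\zeta \mapsto \pi_\zeta \cdot \Ad(h)$ on the square integrable part of $\widehat{N}$ is carried by the $\Ad^*$--action of $H$ on $\cU$; since $H$ is reductive and this action is algebraic on the Zariski--open set $\cU \subset \gz^*$, it preserves $\cU$, the orbit space $\cU/H$ is countably separated, and the action is regular. Hence a measurable cross--section for the orbits exists and the Mackey machine applies with no ``remainder.'' I would then rewrite the direct integral over $\cU$ as an iterated integral over a set of orbit representatives and over the fibers $H/H_\zeta$.

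The third step is the fiberwise computation, where Proposition \ref{stab1} does the essential work. For a fixed $\zeta \in \cU$ I use induction in stages $\Ind_N^G = \Ind_{G_\zeta}^G \circ \Ind_N^{G_\zeta}$ with $N \subset G_\zeta = N H_\zeta \subset G$. Because $\pi_\zeta$ extends to $\pi'_\zeta$ on $G_\zeta$, the tensor (projection) formula gives
$$
\Ind_N^{G_\zeta}\pi_\zeta \cong \pi'_\zeta \otimes \Ind_N^{G_\zeta}\mathbf{1} \cong \pi'_\zeta \otimes L^2(H_\zeta).
$$
Feeding in the Plancherel decomposition of $L^2(H_\zeta)$ over $\widehat{H_\zeta}$ and inducing up to $G$ produces exactly the family $\pi_{\tau,\zeta} = \Ind_{G_\zeta}^G(\tau \widehat{\otimes} \pi'_\zeta)$, $\tau \in \widehat{H_\zeta}$, with the trivial reinterpretation $\tau \widehat{\otimes} \pi'_\zeta = (\text{inflation of }\tau) \otimes \pi'_\zeta$ on $G_\zeta$. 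Reassembling the fibers into the orbital integral exhibits $L^2(G)$ as a direct integral of the $\pi_{\tau,\zeta}$ over $\cU \times \widehat{H_\zeta}$ (modulo orbit equivalence), which forces Plancherel measure on $\widehat{G}$ to be concentrated there.

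The main obstacle is \emph{not} the Mackey obstruction: that has already been removed geometrically by Proposition \ref{stab1}, which is the crucial input allowing the clean extension $\pi_\zeta \rightsquigarrow \pi'_\zeta$ and hence the tensor identity above. Rather, the delicate point is the measure--theoretic bookkeeping --- confirming the regularity of the $H$--action on the square integrable part of $\widehat{N}$ so that disintegration over orbit representatives is legitimate, and matching the resulting product of the base measure $|P(\zeta)|\,d\zeta$ with the fiber Plancherel measures $d\mu_{H_\zeta}$ against the abstract Plancherel measure of $G$ (the Kleppner--Lipsman formula for group extensions). Since $\cU/H$ is algebraic this regularity is automatic, and the computation identifies the support; I would close by noting that Mackey's irreducibility and inequivalence criteria make the $\pi_{\tau,\zeta}$ a genuine subset of $\widehat{G}$, confirming that the listed representations exhaust the Plancherel support.
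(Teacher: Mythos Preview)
Your proposal is correct and follows the same approach as the paper: the paper's entire argument is the one sentence ``In view of Theorem~\ref{planch-conc}, the Mackey little--group method gives us'' preceding the proposition, and what you have written is a careful unpacking of precisely that line. Your treatment of regularity and the Kleppner--Lipsman bookkeeping goes well beyond what the paper records, but the underlying idea---transport the Plancherel support from $N$ to $G$ via Mackey, with Proposition~\ref{stab1} killing the obstruction---is identical.
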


Now we extend Proposition \ref{SaO} from $N$ to $G$\,.
Let $\zeta \in \gz^*$ with $P(\zeta) \ne 0$.  Let
$\tau \in \widehat{H_\zeta}$ with representation space $E_\tau$\,,
and let $\E_\tau \to D$ denote the corresponding $G_\zeta$--homogeneous
holomorphic vector bundle.  Similarly let $E_\zeta$ denote the
complex line that is the representation space of $\chi_\zeta$; it
led to our $G_\zeta$--homogeneous holomorphic line bundle $\E_\zeta \to D$. 
Recall the notation of Proposition \ref{SaO}.  Then 
$E_\tau \widehat{\otimes} H^{0,\ell}_2(D;\E_\zeta)$ is the representation
space of $\tau \widehat{\otimes} \pi'_\zeta$\,.  Denote
\begin{equation}\label{geom-setup}
(\H_{\tau,\zeta} = \H_\tau \otimes \E_\zeta) \to (D=G_\zeta/H_\zeta Z):
\text{ associated vector bundle with fiber } E_\tau \otimes E_\zeta\,.
\end{equation}

Express $D = G_\zeta/H_\zeta Z$\,.
The isotropy $H_\zeta Z$ preserves the infinitesimal right action 
of the antiholomorphic tangent space $\gv_-$ of $D$, so $\gv_-$ acts on
the right on smooth local sections of $\H_{\tau,\zeta} \to D$\,.  In
other words we have a well defined $\overline{\partial}$--operator on
smooth local sections of $\H_{\tau,\zeta} \to D$.  Thus
\begin{lemma}\label{stab2}
$\H_{\tau,\zeta}\to D$ is a hermitian $G_\zeta$--homogeneous holomorphic
vector bundle with $\overline{\partial}$--operator given by the right
action of $\gv_-$\,.
\end{lemma}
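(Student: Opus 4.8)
The plan is to put a holomorphic structure on the $C^\infty$ associated bundle $\H_{\tau,\zeta}=G_\zeta\times_{H_\zeta Z}(E_\tau\otimes E_\zeta)\to D$ by prescribing its $\overline{\partial}$-operator as the right action of $\gv_-$, exactly as in the flag-domain realization of the discrete series. Since $\E_\zeta\to D$ is already the $G_\zeta$-homogeneous holomorphic line bundle of Section \ref{sec3} --- with $\overline{\partial}$ given by the right $\gv_-$-action because $\chi_\zeta$ is holomorphic on $N_-$ by (\ref{L3}) --- and a tensor product of holomorphic bundles is holomorphic, the real work is to construct the holomorphic structure on $\H_\tau$ and to check that $\overline{\partial}$ is well defined and integrable.

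First I would record the homogeneous and hermitian structure. A smooth section is a smooth $f\colon G_\zeta\to E_\tau\otimes E_\zeta$ with $f(gp)=(\tau\otimes\chi_\zeta)(p)^{-1}f(g)$ for $p\in H_\zeta Z$, and $G_\zeta=NH_\zeta$ acts by left translation through holomorphic bundle maps, since both $N$ and $H_\zeta$ normalize $N_-$ and hence act by holomorphic automorphisms of $D\cong N_\C/N_-$ (Lemma \ref{A2}). For the metric, $\tau$ is unitary and the $N$-invariant fiber metric on $\E_\zeta$ from Section \ref{sec3} is also $H_\zeta$-invariant because $H_\zeta$ fixes $\zeta$ and thus every datum defining $\E_\zeta$; their tensor product is a $G_\zeta$-invariant hermitian metric on $\H_{\tau,\zeta}$.

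Next I would define $\overline{\partial}$. The antiholomorphic tangent space of $D$ at the base point is the $(-i)$-eigenspace $\gv_-$ of $J$, so each $X\in\gv_-$ gives a complexified right-invariant vector field $r(X)$, $(r(X)f)(g)=\tfrac{d}{dt}\big|_{0}f(g\exp tX)$, and I set $\overline{\partial}f$ to be the $(0,1)$-form determined by $X\mapsto r(X)f$. This descends to sections of $\H_{\tau,\zeta}$ precisely because $\Ad(H_\zeta Z)\gv_-=\gv_-$: the reductive group $H_\zeta$ preserves $J$, hence $\gv_\pm$, while $Z$ is central and acts trivially under $\Ad$ on $\gv$. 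Thus $r(X)$ carries the equivariance condition to itself and $\overline{\partial}$ is well defined.

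The main obstacle is integrability, $\overline{\partial}^{\,2}=0$. As $\E_\zeta$ is already holomorphic, the Leibniz rule reduces this to $\H_\tau$. For $X,Y\in\gv_-$ the fiberwise obstruction is the action on $E_\tau$ of the $\gz_\C$-component of $[X,Y]$; by Satake's condition (\ref{A1}) we have $[\gv_-,\gv_-]\subset\gv_-+\gz_\C$, and since $Z$ acts trivially on $E_\tau$ this $\gz_\C$-component acts as zero, while the $\gv_-$-component is absorbed into $\overline{\partial}$ and reflects only the integrability of $J$ on $D$ --- which holds since $J$ has constant coefficients (Section \ref{sec1}). Hence $\overline{\partial}^{\,2}=0$ on $\H_\tau$, and therefore on $\H_{\tau,\zeta}=\H_\tau\otimes\E_\zeta$, exhibiting $(\H_{\tau,\zeta},\overline{\partial})$ as the desired $G_\zeta$-homogeneous hermitian holomorphic vector bundle whose $\overline{\partial}$-operator is the right action of $\gv_-$.
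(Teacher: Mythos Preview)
Your proposal is correct and follows the same approach as the paper: the paper's argument (the paragraph immediately preceding the lemma) consists essentially of the single observation that the isotropy $H_\zeta Z$ preserves the right action of $\gv_-$, hence $\overline{\partial}$ is well defined on sections. You supply additional details the paper leaves implicit, notably the explicit verification of integrability $\overline{\partial}^{\,2}=0$ via Satake's condition (\ref{A1}) and the triviality of the $Z$-action on $E_\tau$, as well as the check that the hermitian metric is $G_\zeta$-invariant.
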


Now we have the Hodge--Kodaira--Laplace operator $\square$ as in (\ref{L6}). 
As in that case, where $\tau$ is the trivial representation,
$\square$ acts on the dense subspace $C_c^{p,q}(D;\H_{\tau,\zeta})$ of 
$L_2^{p,q}(D;\H_{\tau,\zeta})$--valued smooth $(p,q)$--forms on $D$.
Note that its action only affects the $\E_\zeta$ component of the values
of local sections.  Thus, as before, the closure and adjoint of $\square$
are equal, so $\square$ is essentially self adjoint, and we have its kernel
\begin{equation}\label{L9}
H_2^{p,q}(D;\H_{\tau,\zeta}) = \{\omega \in L_2^{p,q}(D;\H_{\tau,\zeta})
        \mid \square(\omega) = 0\},
\end{equation}
the space of $\H_{\tau,\zeta}$--valued square integrable harmonic
$(p,q)$--forms on $D$.  Applying Proposition \ref{SaO} we have
\begin{proposition}\label{SaO-lambda}
Let $\zeta \in \gz^*$ such that $\gamma_\zeta$ {\rm (from \ref{H1})}
is nondegenerate with signature $(k,\ell)$ on $\gv$.  Then 
$H_2^{0,q}(D;\H_{\tau,\zeta}) = 0$ for $q \ne \ell$, and the natural
action of $G_\zeta = H_\zeta N$ on $H_2^{0,\ell}(D;\H_{\tau,\zeta})$
is the unitary representation $\tau \widehat{\otimes}\pi'_\zeta$\,.
\end{proposition}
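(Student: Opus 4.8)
The plan is to reduce the statement to Proposition~\ref{SaO} by factoring the Dolbeault complex of $\H_{\tau,\zeta}$ through the tensor decomposition $\H_{\tau,\zeta} = \H_\tau \otimes \E_\zeta$. The decisive structural fact, already noted in the paragraph preceding the statement, is that the $\overline{\partial}$--operator on $\H_{\tau,\zeta}$ is given by the right action of $\gv_-$ and that this action leaves the $\H_\tau$--component untouched. I would make this precise as follows. Viewing $D = G_\zeta/H_\zeta Z$ as the $N$--homogeneous space $N/Z$ (recall that the semidirect structure gives $N\cap H_\zeta = \{1\}$, so $H_\zeta Z \cap N = Z$), the bundle $\H_\tau \to D$ restricts, as an $N$--homogeneous bundle, to the trivial bundle $D \times E_\tau$: the factor $E_\tau$ carries the action of $\tau$ inflated through $G_\zeta \to G_\zeta/N = H_\zeta$ and is acted on trivially by $N$, in particular by $N_-$ and hence by $\gv_-$. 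Consequently, after choosing the hermitian metric on $\H_{\tau,\zeta}$ to be the tensor product of the inner product of $\tau$ on $E_\tau$ with the metric $\gamma$ on $\E_\zeta$, one obtains $N$--equivariant isometries
$$
L_2^{0,q}(D;\H_{\tau,\zeta}) \cong L_2^{0,q}(D;\E_\zeta)\,\widehat{\otimes}\,E_\tau,
$$
under which $\overline{\partial}$ becomes $\overline{\partial}\otimes \mathrm{id}_{E_\tau}$ and therefore $\square$ becomes $\square\otimes \mathrm{id}_{E_\tau}$, where the first tensor factor refers to the operators on $\E_\zeta$--valued forms.

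Second, I would pass from this factorization of $\square$ to the corresponding factorization of its kernel. For a self--adjoint operator of the form $A\otimes \mathrm{id}$ on a completed Hilbert space tensor product, the kernel is $(\ker A)\,\widehat{\otimes}\,E_\tau$; applying this gives
$$
H_2^{0,q}(D;\H_{\tau,\zeta}) \cong H_2^{0,q}(D;\E_\zeta)\,\widehat{\otimes}\,E_\tau.
$$
Proposition~\ref{SaO} now yields the vanishing $H_2^{0,q}(D;\E_\zeta)=0$ for $q\ne\ell$, whence $H_2^{0,q}(D;\H_{\tau,\zeta})=0$ for $q\ne\ell$, and for $q=\ell$ it identifies $H_2^{0,\ell}(D;\E_\zeta)$ with the representation space $\cH_\zeta$ of $\pi_\zeta$, so that $H_2^{0,\ell}(D;\H_{\tau,\zeta}) \cong E_\tau\,\widehat{\otimes}\,\cH_\zeta$, which is exactly the representation space of $\tau\,\widehat{\otimes}\,\pi'_\zeta$.

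Third, I would identify the natural $G_\zeta$--action on $H_2^{0,\ell}(D;\H_{\tau,\zeta})$ under this isomorphism by checking it separately on the two factors of $G_\zeta = N H_\zeta$. On $N$ the isomorphism is equivariant by construction, and Proposition~\ref{SaO} identifies the $N$--action on the $\cH_\zeta$--factor as $\pi_\zeta$ while $N$ acts trivially on $E_\tau$; thus $N$ acts by $\mathrm{id}_{E_\tau}\otimes\pi_\zeta$, which is the restriction of $\tau\,\widehat{\otimes}\,\pi'_\zeta$ to $N$. For $H_\zeta$ I would invoke the fact, already used in Proposition~\ref{stab1}, that every ingredient in the construction of $\pi_\zeta$ (the complex structure $J$, the character $\chi_\zeta$, the bundle $\E_\zeta$, the metric $\gamma$, and $\square$) is $H_\zeta$--invariant, so that $H_\zeta$ acts on the $\cH_\zeta$--factor by $\pi'_\zeta|_{H_\zeta}$; since $H_\zeta$ acts on $E_\tau$ by $\tau$ by definition of $\H_\tau$, the combined $H_\zeta$--action is $\tau\otimes(\pi'_\zeta|_{H_\zeta})$. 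Matching both restrictions shows the $G_\zeta$--action is $\tau\,\widehat{\otimes}\,\pi'_\zeta$.

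I expect the main obstacle to be the third step, namely verifying that the tensor decomposition is $G_\zeta$--equivariant and not merely $N$--equivariant: the $H_\zeta$--action moves the base point of $D$ and mixes the geometric trivialization with $\tau$, so one must track the cocycle carefully and confirm that the metric and the self--adjoint extension of $\square$ are compatible with $\pi'_\zeta$, rather than only with $\pi_\zeta$. A secondary technical point, when $\tau$ is infinite--dimensional, is to justify the kernel factorization at the level of completed Hilbert space tensor products and to confirm that the Andreotti--Vesentini decomposition (\ref{AV}) and the resulting identification with Dolbeault cohomology survive the passage to vector--bundle coefficients of possibly infinite rank.
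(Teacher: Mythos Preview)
Your proposal is correct and follows exactly the paper's approach: the paper's entire argument is the sentence ``Applying Proposition~\ref{SaO} we have,'' preceded by the observation that $\square$ affects only the $\E_\zeta$ component, and you have simply unpacked that one-line reduction in detail. The tensor factorization $\square = \square\otimes\mathrm{id}_{E_\tau}$ and the resulting kernel identification are precisely what the paper is invoking implicitly; your careful tracking of the $H_\zeta$--equivariance and the infinite-rank caveat go beyond what the paper spells out, but fill in the same argument rather than a different one.
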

Again we can apply \cite{AV1965} to see
\begin{equation}\label{AV1}
L_2^{p,q}(D;\H_{\tau,\zeta}) 
        = c\ell\,\,\overline{\partial}^* L_2^{p,q+1}(D;\H_{\tau,\zeta})
        + c\ell\,\, \overline{\partial}L_2^{p,q-1}(D;\H_{\tau,\zeta}) 
        + H_2^{p,q}(D;\H_{\tau,\zeta})
\end{equation}
where $c\ell$ denotes $L^2$ closure.
Making use of elliptic regularity of $\square$
we identify $H_2^{p,q}(D;\H_{\tau,\zeta})$ as a Hilbert space
of square integrable Dolbeault cohomology based on 
Schwartz class forms,
$$ 
H_2^{p,q}(D;\H_{\tau,\zeta}) \cong 
  {\rm Kernel\,} \bigl ( \overline{\partial}: \cS^{p,q}(D;\H_{\tau,\zeta})
	\to \cS^{p,q+1}(D;\H_{\tau,\lambda})\bigr ) /
  {\rm Image\,} \bigl (\overline{\partial}: \cS^{p,q-1}(D;\H_{\tau,\zeta})
	\to \cS^{p,q}(D;\H_{\tau,\zeta})\bigr ).
$$
Thus $H_2^{p,q}(D;\H_{\tau,\zeta})$ is a complete locally convex topological 
vector space and is independent of choice of the hermitian inner product 
on $\E_\zeta$ used to define $\square$ on $\H_{\tau,\lambda}$\,.

Now let us return to the representations 
$\pi_{\tau,\zeta} = \Ind_{G_\zeta}^G(\tau \widehat{\otimes} \pi'_\zeta) 
\in \widehat{G}$ of Proposition \ref{supp-planch-g}.  The representation
space of $\pi_{\tau,\zeta}$ is
\begin{equation}\label{rep-g}
\begin{aligned}
&\cH_{\pi_{\tau,\zeta}} = \{f: G \to H_2^{0,\ell}(D;\H_{\tau,\zeta}) \mid
	f(gx) = (\tau \widehat{\otimes} \pi'_\zeta)(x)^{-1}f(g)
	\text{ for } x \in G_\zeta \\
&\text{with inner product given by } ||f||^2 =
	\int_{G/G_\zeta} ||f(gG_\zeta)||^2 d(gG_\zeta) < \infty.
\end{aligned}
\end{equation}
The extension of Theorem \ref{planch-conc} to $G$ is
\begin{theorem} \label{planch-conc-g}
Let $N$ be a square integrable connected simply connected nilpotent Lie group
with center $Z$.  Let $H$ be a reductive group of automorphisms of $N$
that preserves a nondegenerate symmetric bilinear form on $\gn/\gz$.
Let $G = N \rtimes H$ and suppose that {\rm (\ref{A1})} and 
{\rm (\ref{A2})} hold.  Then Plancherel measure on $\widehat{G}$ is
concentrated on $\{\pi_{\tau,\zeta} \mid \zeta \in \gz^*, P(\zeta) \ne 0,
\text{ and } \tau \in \widehat{H_\zeta}\}$.
\end{theorem}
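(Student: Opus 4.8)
The statement consolidates the constructions of this section, so the plan is to assemble them through the Mackey little--group machine --- more precisely, the Plancherel formula for the group extension $N \triangleleft G$ in the sense of Kleppner--Lipsman. Three inputs are required: (i) the support of Plancherel measure on $\widehat{N}$; (ii) the $G$--action on $\widehat{N}$ together with the resulting little groups; and (iii) the vanishing of the Mackey obstruction, so that the representations $\pi_{\tau,\zeta}$ are honest (linear, not projective) members of $\widehat{G}$. The standing assumption that $N$, $H$ and $G$ are second countable of Type I guarantees that the machine applies and that Plancherel measure exists on each of $\widehat{N}$, $\widehat{H_\zeta}$ and $\widehat{G}$.

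For (i), since $N$ is square integrable, Theorem \ref{planch-conc} identifies the support of Plancherel measure on $\widehat{N}$ as $\{\pi_\zeta \mid P(\zeta) \ne 0\}$, carried by the absolutely continuous measure $|P(\zeta)|\,d\zeta$ on the Zariski--open set $\{\zeta \in \gz^* \mid P(\zeta) \ne 0\}$. For (ii), inner automorphisms by elements of $N$ fix every class in $\widehat{N}$, so the $G$--action on $\widehat{N}$ factors through $H = G/N$; Lemma \ref{stab0} then shows that the $G$--stabilizer of $\pi_\zeta$ is precisely the little group $G_\zeta = N H_\zeta$, where $H_\zeta$ is the $\Ad^*$--stabilizer of $\zeta$ in $\gz^*$. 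Because $\zeta \mapsto \pi_\zeta$ is rational and the $H$--action on $\gz^*$ is algebraic, the orbit space $\widehat{N}/G$ is a standard Borel space and the embedding is regular, so no measure--theoretic pathology arises.

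Input (iii) is the geometric heart, and it is exactly what Sections \ref{sec3}--\ref{sec4} were built to supply. In general $\pi_\zeta$ extends to $G_\zeta$ only projectively, with a cohomology class (the Mackey obstruction) measuring the failure of linearity. Here, however, Proposition \ref{SaO} realizes $\pi_\zeta$ on $H_2^{0,\ell}(D;\E_\zeta)$, and every ingredient of that realization --- the domain $D$, the invariant complex structure $J$, the bundle $\E_\zeta$, its hermitian metric, and the operator $\square$ --- is preserved by $H_\zeta$. Proposition \ref{stab1} records the consequent extension of $\pi_\zeta$ to a genuine unitary representation $\pi'_\zeta$ of $G_\zeta$; thus the obstruction vanishes, and $\pi_{\tau,\zeta} = \Ind_{G_\zeta}^G(\tau \widehat{\otimes} \pi'_\zeta)$ is a well defined non--projective element of $\widehat{G}$ for each $\tau \in \widehat{H_\zeta}$.

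With (i)--(iii) in hand, the Kleppner--Lipsman disintegration completes the argument: Plancherel measure on $\widehat{G}$ pushes forward under restriction to $N$ to a measure equivalent to Plancherel measure of $N$, and over the $H$--orbit of a point $\zeta$ in its support the fiber is $\{\tau \widehat{\otimes} \pi'_\zeta \mid \tau \in \widehat{H_\zeta}\}$ weighted by Plancherel measure of $H_\zeta$. Since the base measure is concentrated on $\{P(\zeta) \ne 0\}$ by (i) and Plancherel measure of $H_\zeta$ lives on $\widehat{H_\zeta}$, the measure on $\widehat{G}$ is concentrated on $\{\pi_{\tau,\zeta} \mid P(\zeta) \ne 0,\ \tau \in \widehat{H_\zeta}\}$, as asserted. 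The one point demanding genuine care is that the Mackey obstruction vanish uniformly in $\zeta$, with $\pi'_\zeta$ depending measurably on the parameter; this is precisely why the explicit, $H_\zeta$--equivariant geometric model of Proposition \ref{SaO} --- rather than an abstract cocycle computation --- is what makes the theorem go through.
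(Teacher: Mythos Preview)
Your proposal is correct and follows the same route as the paper: the theorem is a restatement of Proposition~\ref{supp-planch-g}, which the paper obtains by combining Theorem~\ref{planch-conc} (support of Plancherel measure on $\widehat{N}$), Lemma~\ref{stab0} (identification of the little groups), and Proposition~\ref{stab1} (geometric extension $\pi'_\zeta$ bypassing the Mackey obstruction), then invoking the Mackey little--group method. Your write--up is more explicit than the paper's---you name Kleppner--Lipsman and spell out the measurability and regularity issues---but the argument is the same.
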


The construction (\ref{rep-g}) of the $\pi_{\tau,\zeta}$ is analogous
to that of the standard tempered representations of real reductive Lie
groups (\cite{W1973}, \cite{W2018}).  For that, the domain $D$ 
corresponds to a flag domain, $\pi_\zeta$ corresponds to a relative
discrete series representation of the Levi component of a parabolic
subgroup, and the construction
$\pi_{\tau,\zeta} = \Ind_{G_\zeta}^G(\tau \widehat{\otimes} \pi'_\zeta)$
corresponds to $L^2$ parabolic induction.  In both
settings, the geometric realizations can occur both on spaces of 
partially harmonic square integrable bundle--valued spinors and
on square integrable partially holomorphic cohomology spaces.

\section{Weakly Symmetric Pseudo--Riemannian Nilmanifolds}\label{sec5}
\setcounter{equation}{0}

The theory of weakly symmetric pseudo--riemannian nilmanifolds provides 
many interesting examples of the spaces $G/H$, $G = N\rtimes H$, studied
in Sections \ref{sec3} and \ref{sec4} above.  We list the more accessible
examples in Sections \ref{sec5} and \ref{sec6}.  Here we start by
sketching some elements of the theory.  In the next section we also
discuss the classification.

Recall that a riemannian manifold $(S,ds^2)$ is {\bf symmetric} if, given
$x \in S$, there is an isometry $s_x$ of $(S,ds^2)$ such that $s_x(x) = x$
and $ds_x(\xi) = -\xi$ for every tangent vector $\xi \in T_x(S)$.  
It is {\bf weakly symmetric} if, given $x \in S$ and $\xi \in T_x(S)$,
there is an isometry $s_{x,\xi}$ of $(S,ds^2)$ such that $s_{x,\xi}(x) = x$
and $ds_{x,\xi}(\xi) = -\xi$.  The obvious difference is that $s_{x,\xi}$
depends on $\xi$ as well as $x$.  Many properties of symmetric spaces hold 
in the weakly symmetric setting, for example homogeneity, the geodesic 
orbit property, commutativity of the $L^1$
convolution algebra, the theory of spherical functions, and Plancherel and
Fourier inversion formulae; see \cite{W2007} for an exposition.  But the
associated Lie group theory and the classification theory are quite different.  

There are many weakly symmetric riemannian nilmanifolds, i.e. 
weakly symmetric riemannian manifolds 
that admit a transitive nilpotent group of isometries.  By contrast
the only symmetric riemannian nilmanifolds are the flat ones; they are the
products of flat tori and euclidean spaces.  Here is the best known
example of this phenomenon.  Let $\gn$ be the Heisenberg Lie algebra
of real dimension $2n+1$,
$\gn = \Im \C + \C^n$ with composition $[(z,v),(z',v')] = \Im \langle v,
v' \rangle$ where $\langle \cdot , \cdot \rangle$ is the usual positive
definite hermitean inner product on $\C^n$.  The unitary group $U(n)$
acts by isomorphisms, $h: (z,v) \mapsto (z,hv)$.  That gives us the
semidirect product group $G = N \rtimes U(n)$.  That results in 
weakly symmetric $G$--invariant riemannian metrics on $N = G/U(n)$.
None of the corresponding weakly symmetric spaces are symmetric.
See \cite{W2007} for an exposition of Yakimova's classification
(\cite{Y2004}, \cite{Y2005}, \cite{Y2006}) of weakly symmetric 
riemannian manifolds.

The theory of weakly symmetric pseudo--riemannian manifolds is much 
more delicate, and in fact there are several competing definitions.
We will consider the most accessible one, that of 
real forms of weakly symmetric riemannian manifolds.

Let $(M_r\,, ds_r^s)$ be a connected weakly symmetric riemannian manifold.
Suppose that $M_r = G_r/H_r$ is a riemannian nilmanifold, in other
words that $G_r = N_r\rtimes H_r$ where $N_r$ is a connected nilpotent
Lie group acting transitively on $M_r$\,.  The associated {\bf real form 
family} $\{\{G_r/H_r\}\}$ consists of all $G/H$ 
with the same complexification $(G_r)_\C/(H_r)_\C$.  In other words 
$H$ is a real form of $(H_r)_\C$\,, and $G = N \rtimes H$ where 
$N$ is an $\Ad(H)$--invariant real form of $(N_r)_\C$\,.  
See \cite{WC2018} for
the definition and a discussion of the $\Ad(H)$--invariance condition.
These $M=G/H$, with invariant pseudo--riemannian metric $ds^2$, are our 
weakly symmetric pseudo--riemannian nilmanifolds.  
Every weakly symmetric riemannian manifold is a commutative space, 
and we work a little bit more generally,
assuming that $M_r$ is a commutative nilmanifold.

\begin{definition}\label{cpx}{\rm
A weakly symmetric pseudo--riemannian nilmanifold
$M = G/H$ is of {\bf complex type} if it satisfies the conditions
(\ref{A1}) and (\ref{A2}).} \hfill $\diamondsuit$
\end{definition}  

\begin{example}\label{w-circle}{\rm
Let $M = G/H$ be a weakly symmetric pseudo--riemannian nilmanifold,
say $G = N\rtimes H$ and $\gn = \gz + \gv$ as in {\rm (\ref{H})}.
Suppose that $\Ad_G(H)$ is irreducible on $\gv$ and that $H$ has
a central subgroup $T \cong U(1)$.  Let $\zeta \in \gt$ such that
$J := \Ad(\exp(\zeta))|_\gv$ has square $-I$.   Then $J$ is an
$\Ad(H)$--invariant complex structure on $\gv$ with which
$M = G/H$ is of complex type.} \hfill $\diamondsuit$
\end{example}

There are many cases, as we see in Table \ref{heis-comm} below,
of weakly symmetric pseudo--riemannian nilmanifolds $M_i = G_i/H_i$
with $G_i = N \rtimes H_i$ and $H_2 \subsetneqq H_1$\,.  There in
both cases we have the same $\gn = \gz + \gv$.
If $M_1 = G_1/H_1$ is of complex type as defined by a central circle
subgroup of $H_1$ as in 
Example \ref{w-circle}, the $\Ad(H_1)$--invariant complex structure
$J$ on $\gv$ is $\Ad(H_2)$--invariant as well, so 
$M_2 = G_2/H_2$ is of complex type, and $\gv$ has the same
signature for both.

We now extract a number of examples of weakly symmetric
pseudo--riemannian nilmanifolds of complex type from \cite{WC2018}, 
to which the results of Section \ref{sec4} apply.  Those are manifolds 
$(M,ds^2)$, $M = G/H$ with $G = N\rtimes H$, $N$ nilpotent and $H$ 
reductive in $G$, such that the pseudo--riemannian metric $ds^2$ is
the real part of an $H$--invariant pseudo--K\"ahler metric.  The first
examples are, of course, those for which $N$ is a Heisenberg group and
$H$ acts $\R$--irreducibly on $\gv$.  Table \ref{heis-comm} just below 
extracts
them from \cite[Table 4.2]{WC2018}.  For the convenience of the reader 
who wants to check this passage to real forms we retain the numbering 
of real form families as in \cite[Table 4.2]{WC2018}.  For the signature 
of $\gv$ we give the
signature $(2k,2\ell)$ of the real symmetric bilinear form $\beta_\zeta$
on $\gv$ for a choice of nonzero $\zeta$; if we used $-\zeta$ instead, 
then the signature on $\gv$ would be the reverse, $(2\ell,2k)$.  Then, of 
course, the signature of the hermitian form $\gamma_\zeta$ on $\gv$ 
is $(k,\ell)$, and of $\gamma_{-\zeta}$ is $(\ell,k)$.  For brevity we only 
list one of $(k,\ell)$ and $(\ell,k)$.

\addtocounter{equation}{1}
\begin{longtable}{|r|l|l|l|}
\caption*{\bf {\normalsize Table} \thetable \quad {\normalsize
        Irreducible Commutative Heisenberg Nilmanifolds
        $(N\rtimes H)/H$}} \label{heis-comm} \\
\hline
 & Group $H$ & $\gv \text{ and signature}(\gv)$&
                $\gz$  \\ \hline
\hline
\hline
\endfirsthead
\multicolumn{4}{l}{{\normalsize \textit{Table \thetable\, continued from
        previous page $ \dots$}}} \\
\hline
 & Group $H$ & $\gv \text{ and signature}(\gv)$&
                $\gz$  \\ \hline
\hline
\endhead
\hline \multicolumn{4}{r}{{\normalsize \textit{$\dots$ Table \thetable\,
        continued on next page}}} \\
\endfoot
\hline
\endlastfoot
\hline
{\rm 1} & $SU(r,s)$  &  $\C^{r,s}, \,\,\, (2r,2s)$ & $\Im\C$
        \\ \hline \hline
{\rm 2}  & $U(r,s)$ &  $\C^{r,s}, \,\,\, (2r,2s)$ & $\Im\C$
        \\ \hline\hline
{\rm 3} & $Sp(k,\ell)$ & $\C^{2k,2\ell},\,\,\, (4k,4\ell)$ &$\Im\C$
        \\ \hline\hline
{\rm 4} & $U(1)\cdot Sp(k,\ell)$ & $\C^{2k,2\ell},\,\,\, (4k,4\ell)$
        & $\Im\C$ \\ \hhline{|~|-|-|-|}
   & $U(1) \cdot Sp(m;\R)$ & $\C^{m,m},\,\,\, (2m,2m)$ & $\Im\C$
        \\ \hline\hline
{\rm 5} & $SO(2)\cdot SO(r,s), r+s\geqq 2$ & $\R^{2\times(r,s)},\,\,\, (2r,2s)$
        &  $\Im\C$ \\ \hhline{|~|-|-|-|}
   & $U(1)\cdot SO^*(n), n$ even & $\C^n \simeq \R^{n,n},\,\,\, (n,n)$ & $\Im\C$
        \\ \hline\hline
{\rm 6} & $U(k,\ell)$ & $S_\C^2(\C^{k,\ell}),\,\,\, (k^2+k+\ell^2+\ell,2k\ell))$
        & $\Im\C$
         \\ \hline\hline
{\rm 7} & $SU(k,\ell), k+\ell$ odd & 
	$\Lambda_\C^2(\C^{k,\ell}),\,\,\,(k^2-k+\ell^2-\ell,2k\ell)$
         & $\Im\C$ \\ \hline\hline
{\rm 8} & $U(k,\ell)$ & $\Lambda^2_\C(\C^{k,\ell}),\,\,\,(k^2-k+\ell^2-\ell,2k\ell))$
	&$\Im\C$ \\ \hline\hline
{\rm 9} & $SU(k,\ell) \cdot SU(r,s)$ & $\C^{(k,\ell) \times (r,s)},\,\,\,
        (2kr+2\ell s, 2ks + 2\ell r)$ & $\Im\C$\\ \hhline{|~|-|-|-|}
   & $SL(\tfrac{m}{2};\H) \cdot SL(\tfrac{n}{2};\H)$ & $\C^{m \times n},\,\,\, 
        (mn,mn)$ & $\Im\C$ \\ \hline\hline
{\rm 10} & $S(U(k,\ell) \cdot U(r,s))$ & $\C^{(k,\ell) \times (r,s)},\,\,\,
        (2kr+2\ell s, 2ks + 2\ell r)$ & $\Im\C$\\ \hhline{|~|-|-|-|}
   & $S(GL(\tfrac{m}{2};\H) \cdot GL(\tfrac{n}{2};\H))$
        & $\C^{m \times n},\,\,\, 
        (mn,mn)$ & $\Im\C$ \\ \hline\hline
{\rm 11} & $U(a,b) \cdot Sp(k,\ell),  a+b=2$
         & $\C^{a,b} \otimes_\C \C^{2k,2\ell},\,\,\,
        (4ak+4b\ell,4a\ell +4bk)$ & $\Im\C$\\ \hhline{|~|-|-|-|}
   & $U(a,b)\cdot Sp(m;\R), a+b=2$ & $\C^{a,b}\otimes_\C \C^{2m},\,\,\,
        (4m,4m)$ & $\Im\C$\\ \hline\hline
{\rm 12} & $SU(a,b) \cdot Sp(k,\ell)$ 
	& $\C^{a,b} \otimes_\C \C^{2k,2\ell},\,\,\,
        (4ak+4b\ell,4a\ell +4bk)$ & $\Im\C$ \\ \hline\hline
{\rm 13} & $U(a,b) \cdot Sp(k,\ell), a+b=3$ 
        & $\C^{a,b} \otimes_\C \C^{2k,2\ell},\,\,\,
        (4ak+4b\ell,4a\ell +4bk)$ & $\Im\C$ \\ \hhline{|~|-|-|-|}
   & $U(a,b) \cdot Sp(m;\R), a+b=3$ & $\C^{a,b} \otimes_\C \C^{2m},\,\,\,
        (6m,6m)$ & $\Im\C$ \\ \hline\hline
{\rm 14} & $U(a,b) \cdot Sp(k,\ell), \begin{smallmatrix} a+b=4\\k+\ell = 4
        \end{smallmatrix}$ & $\C^{a,b} \otimes_\C \C^{2k,2\ell},\,\,\,
        (4ak+4b\ell,4a\ell +4bk)$ & $\Im\C$ \\ \hhline{|~|-|-|-|}
   & $U(a,b) \cdot Sp(4;\R), a+b=4$ & $\C^{a,b} \otimes_\C \C^8,\,\,\,
        (32,32)$ & $\Im\C$ \\ \hline\hline
{\rm 15} & $SU(k,\ell) \cdot Sp(r,s), r+s=4$ 
	& $\C^{k,\ell} \otimes_\C \C^{2r,2s},\,\,\,
        (4kr+4\ell s,4ks +4\ell k)$ & $\Im\C$\\ \hline\hline
{\rm 16} & $U(k,\ell) \cdot Sp(r,s), \begin{smallmatrix} k+\ell \geqq 3\\ r+s=4
        \end{smallmatrix}$ & $\C^{k,\ell} \otimes_\C \C^{2r,2s},\,\,\,
        (4kr+4\ell s,4ks +4\ell r)$ & $\Im\C$ \\ \hhline{|~|-|-|-|}
   & $U(k,\ell) \cdot Sp(4;\R), k+\ell \geqq 3$ 
	& $\C^{k,\ell}\otimes_\C \C^8,\,\,\,
        (8m,8m)$ & $\Im\C$ \\ \hline\hline
{\rm 17} & $U(1) \cdot Spin(7)$ & $\C^8,\,\,\, (16,0)$
        & $\Im\C$ \\ \hhline{~|-|-|-}
   & $U(1) \cdot Spin(6,1)$ & $\C^{6,2},\,\,\, (12,4)$
        & $\Im\C$ \\ \hhline{~|-|-|-}
   & $U(1) \cdot Spin(5,2)$ & $\C^{6,2},\,\, (12,4)$
        & $\Im\C$ \\ \hhline{~|-|-|-}
   & $U(1) \cdot Spin(4,3)$ & $\C^{4,4},\,\, (8,8)$
        & $\Im\C$ \\ \hline\hline
{\rm 18} & $U(1) \cdot Spin(9)$ & $\C \otimes_\R \R^{16},\,\, (32,0)$
        & $\Im\C$ \\ \hhline{~|-|-|-}
   & $U(1)\cdot Spin(r,s), r+s=9$ & $\C^{8,8},\, (16,16)$
        & $\Im\C$ \\ \hline\hline
{\rm 19} & $Spin(10)$ & $\C^{16},\,\, (32,0)$ & $\Im\C$ \\ 
	\hhline{|~|-|-|-|}
   & $Spin(8,2)$ & $\C^{8,8},\, (16,16)
        $ & $\Im\C$ \\
        \hhline{|~|-|-|-|}
   & $Spin(6,4)$ & $\C^{8,8},\, (16,16)$
        & $\Im\C$ \\
         \hline\hline
{\rm 20} & $U(1) \cdot Spin(10)$ & $\C^{16},\,\, (32,0)$
        & $\Im\C$ \\ \hhline{|~|-|-|-|}
   & $U(1)\cdot Spin(8,2)$ & $\C^{8,8},\, (16,16)
        $ & $\Im\C$ \\
        \hhline{|~|-|-|-|}
   & $U(1)\cdot Spin(6,4)$ & $\C^{8,8},\, (16,16)$
        & $\Im\C$ \\
        \hhline{|~|-|-|-|}
   & $U(1)\cdot Spin^*(10)$ & $\H^{4,4},\, (16,16)$
        & $\Im\C$ \\
        \hline\hline
{\rm 21} & $U(1) \cdot G_2$ & $\C^7,\,\, (14,0)$ & $\Re\O$
        \\ \hhline{|~|-|-|-|}
   & $U(1)\cdot G_{2,A_1A_1}$ & $\C^{3,4},\, (6,8)$
        & $\Re\O_{sp}$ \\
        \hline\hline
{\rm 22} & $U(1) \cdot E_6$ & $\C^{27},\,\, (54,0)$ &
        $\Im\C$ \\ \hhline{|~|-|-|-|}
   & $U(1)\cdot E_{6,A_5A_1}$ & $\C^{15,12},\, (30,24)$
        & $\Im\C$ \\
        \hhline{|~|-|-|-|}
   & $U(1)\cdot E_{6,D_5T_1}$ & $\C^{16,11},\, (32,22)$
        & $\Im\C$ \\
\end{longtable}

In Table \ref{heis-comm}, every entry is contained in a ``maximal''
entry for which $\dim_\C\gv = m$ and $H$ is a subgroup of $U(m)$.
There are so many of those, that it is best to restrict attention
to the cases where the action of $H$ on $\gv$ is irreducible.
The next examples are those for which the action of $H$ on $\gv$ is
irreducible.  We extract them from \cite[Table 5.2]{WC2018}.
Again we retain the numbering corresponding to real form families from that 
table.  Also we omit the cases where $N$ is commutative, i.e. where
$G/H = \C^n$.

\addtocounter{equation}{1}
{\footnotesize
\begin{longtable}{|r|l|l|l|}
\caption*{\bf \qquad\qquad\quad {\normalsize Table} \thetable \quad {\normalsize
Maximal Irreducible Weakly Symmetric Nilmanifolds}  \\
\centerline{\normalsize $(N\rtimes H,H)$ of Complex Type }} \label{max-irred} \\
\hline
 & Group $H$ & $\gv \text{ and signature}(\gv)$&
                $\gz$  \\ \hline
\hline
\hline
\endfirsthead
\multicolumn{4}{l}{{\normalsize \textit{Table \thetable\, continued from
        previous page $ \dots$}}} \\
\hline
 & Group $H$ & $\gv \text{ and signature}(\gv)$&
                $\gz$  \\ \hline
\hline
\endhead
\hline \multicolumn{4}{r}{{\normalsize \textit{$\dots$ Table \thetable\,
        continued on next page}}} \\
\endfoot
\hline
\endlastfoot
\hline
4 & $U(1)\cdot SO(r,s), r+s\ne 4$ & $\C^{r,s},\,\,(2r,2s)$ 
	& $\Im \C$ \\
        \hhline{|~|-|-|-|}
  & $U(1)\cdot SO^*(n), n=2m\ne 4$ & $\C^{m,m},\,\,(2m,2m)$
        & $\Im \C$  \\ \hline \hline

5 & $SU(r,s)$, $r+s$ even & $\C^{r,s},\,\,(2r,2s)$
                & $\Lambda_\R^2(\C^{r,s}) \oplus\Im\C$
         \\ \hhline{|~|-|-|-|}
  & $U(r,s)$ & $\C^{r,s},\,\, (2r,2s)$
        & $\Lambda_\R^2(\C^{r,s}) \oplus\Im\C$
         \\ \hline\hline

6 & $SU(r,s), r+s$ odd & $\C^{r,s},\,\, (2r,2s)$
        & $\Lambda_\R^2(\C^{r,s})$
         \\ \hline\hline

7 & $SU(r,s), r+s$ odd & $\C^{r,s},\,\, (2r,2s)$ & $\Im\C$   
	\\ \hline\hline

8 & $U(r,s)$ & $\C^{r,s},\,\, (2r,2s)$ & $\gu(r,s)$  
	\\ \hline\hline

9 & $(\{1\} \text{ or } U(1))\cdot Sp(r,s)$ & $\H^{r,s},\,\, (4r,4s)$
        & $\Re \H^{(r,s) \times (r,s)}_0 \oplus 
                \Im\H$  \\ \hhline{|~|-|-|-|}
   & $U(1)\cdot Sp(n;\R)$ & $\R^{2n,2n},\,\, (2n,2n)$
        & $\Re \H_{sp,0}^{n \times n} \oplus 
               \Im\H_{sp}$ 
	\\ \hline\hline

10 & $U(r,s)$ & $S_\C^2(\C^{r,s})$,\,\, {\tiny $(r(r+1) + s(s+1), 2rs)$}
        & $\Im\C$  
	\\ \hline\hline

11 & $SU(r,s), r+s \geqq 3, r+s$ odd & $\Lambda^2_\C(\C^{r,s}),\,\, 
        (r^2-r+s^2-s,2rs)$ & $\Im\C$ \\ \hhline{|~|-|-|-|}
   & $U(r,s), r+s \geqq 3$ &  ${\Lambda}^2_\C(\C^{r,s}),\,\, 
        (r^2-r+s^2-s,2rs)$ & $\Im\C$
	\\ \hline\hline

12 & $U(1)\cdot Spin(7)$ & $\O_\C = \C\otimes_\R \R^8,\,\, (16,0)$
        & $\R^7 \oplus \R$ \\ \hhline{|~|-|-|-|}
  & $U(1)\cdot Spin(6,1)$ & $\C \otimes_\R \R^{6,2},\,\, (12,4)$
        & $\R^{6,1}\oplus \R$ \\
        \hhline{|~|-|-|-|}
  & $U(1)\cdot Spin(5,2)$ & $\C \otimes_\R \R^{6,2},\,\, (12,4)$
        & $\R^{5,2}\oplus \R$ \\
        \hhline{|~|-|-|-|}
  & $U(1)\cdot Spin(4,3)$ & $\C \otimes_\R \R^{4,4},\,\, (8,8)$
        & $\R^{4,3}\oplus \R$ 
	\\ \hline\hline

13 & $U(1)\cdot Spin(9)$ & $\C\otimes_\R \R^{16},\,\, (32,0)$
        & $\R$ \\ \hhline{|~|-|-|-|}
   & $U(1)\cdot Spin(8,1)$ & $\C\otimes_\R \R^{8,8},\,\, (16,16)$
        & $\Im\C$ \\ \hhline{|~|-|-|-|}
   & $U(1)\cdot Spin(7,2)$ & $\C\otimes_\R \C^{4,4},\,\, (16,16)$
        & $\Im\C$ \\ \hhline{|~|-|-|-|}
   & $U(1)\cdot Spin(6,3)$ & $\C\otimes_\R \C^{4,4},\,\, (16,16)$
        & $\Im\C$ \\ \hhline{|~|-|-|-|}
   & $U(1)\cdot Spin(5,4)$ & $\C\otimes_\R \H^{2,2},\,\, (16,16)$
        & $\Im\C$ 
	\\ \hline\hline

14 & $(\{1\} \text{ or } U(1)) \cdot Spin(10)$ & $\C^{16},\,\, (32,0)$
        & $\Im\C$ \\ \hhline{|~|-|-|-|}
   & $Spin(9,1)$ & $\R^{16,16},\,\, (16,16)$ & $\R$ \\
        \hhline{|~|-|-|-|}
   & $(\{1\} \text{ or } U(1)) \cdot Spin(8,2)$ & $\C^{8,8},\,\, (16,16)$
        & $\Im\C$ \\ \hhline{|~|-|-|-|}
   & $Spin(7,3)$ & $\H^{4,4}$,\,\, (16,16) & $\R$ \\
        \hhline{|~|-|-|-|}
   & $(\{1\} \text{ or } U(1)) \cdot Spin(6,4)$ & $\C^{8,8},\,\, (16,16)$
        & $\Im\C$ \\ \hhline{|~|-|-|-|}
   & $Spin(5,5)$ & $\R^{16,16},\,\, (16,16)$ & $\R$ \\
        \hhline{|~|-|-|-|}
   & $U(1)\cdot Spin^*(10)$ & $\H^{4,4},\,\, (16,16)$ & $\Im\C$ 
	\\ \hline\hline

15 & $U(1)\cdot G_2$ & $\C^7 = \Im\O_\C,\,\, (14,0)$
        & $\R = \Re\O$ \\ \hhline{|~|-|-|-|}
   & $U(1)\cdot G_{2,A_1A_1}$ & $\C^{3,4},\,\, (6,8)$
        & $\Re\O_{sp}$ 
	\\ \hline\hline

16 & $U(1)\cdot E_6$ & $\C^{27},\,\, (54,0)$
        & $\Im\C$ \\ \hhline{|~|-|-|-|}
   & $U(1)\cdot E_{6,A_5A_1}$ & $\C^{15,12},\,\, (30,24)$
        & $\Im\C$ \\ \hhline{|~|-|-|-|}
   & $U(1)\cdot E_{6,D_5T_1}$ & $\C^{16,11},\,\, (32,22)$
        & $\Im\C$ 
	\\ \hline\hline

19 & $\begin{matrix} (\{1\} \text{ or }U(1))\cdot (SU(k,\ell) \cdot SU(r,s)), \\
        k+\ell,r +s\geqq 3, U(1) \text{ if } k+\ell = r+s \end{matrix}$
        & {\tiny $\C^{(k,\ell)\times (r,s)},\,\, (2kr+2\ell s, 2ks + 2\ell r)$}
        & $\Im\C$     \\ \hhline{|~|-|-|-|}
  & $(\{1\} \text{ or }U(1))\cdot SL(m;\C)$ & $\gg\gl(m;\C),\,\, (m^2,m^2)$
        & $\Im\C$ \\ \hline\hline

20 & $\begin{matrix} (\{1\} \text{ or } U(1)) \cdot (SU(2) \cdot SU(r,s)), 
        \\ r+s \geqq 2, U(1) \text{ if } r+s=2 \end{matrix}$
        & $\C^{2\times (r,s)},\,\, (4r,4s)$
        & $\Im \C^{2\times 2} = \gu(2)$   \\ \hhline{|~|-|-|-|}
    & $(\{1\} \text{ or } U(1)) \cdot(SU(1,1) \cdot SU(r,s))$
        & $\C^{(1,1)\times (r,s)},\,\, (2r+2s,2r+2s)$ & $\gu(1,1)$     
	\\ \hline\hline

21 & $\begin{matrix} (\{1\} \text{ or } U(1))\cdot (Sp(2) \cdot SU(r,s)), 
	\\ r+s \geqq 3, U(1) \text{ if } r+s \leqq 4 \end{matrix}$
        & $\H^2\otimes_\R \C^{r,s},\,\, (16r,16s)$ & $\Im\C$  \\ 
	\hhline{|~|-|-|-|}
   & $(\{1\} \text{ or } U(1)) \cdot (Sp(1,1) \cdot SU(r,s))$
        & $\H^{1,1}\otimes_\R \C^{r,s},\,\, (8r+8s,8r+8s)$ & $\Im\C$  \\
        \hhline{|~|-|-|-|}
   & $Sp(2;\R) \cdot U(r,s))$
        & $\R^{4,4}\otimes_\R \C^{r,s} ,\,\, (8r+8s,8r+8s)$ & $\Im\C$
        \\ \hline\hline

22 & $H = U(k,\ell)\cdot Sp(r,s), k+\ell =2$ & {\tiny $\C^{k,\ell} \otimes_\C \H^{r,s},\,\, 
        (4kr+4\ell s, 4ks+4\ell r)$} &
        $\gu(k,\ell)$   \\ \hhline{|~|-|-|-|}
   & $H = U(k,\ell)\cdot Sp(n;\R), k+\ell =2$ & {\tiny $\C^{k,\ell}\otimes_\C 
        \otimes_\C (\R^{2n}\oplus\R^{2n}),\, (4n,4n)$}
        & $\gu(k,\ell)$   
	\\ \hline\hline

23 & $H = U(k,\ell)\cdot Sp(r,s),\, k+\ell=3$
        & {\tiny $\C^{k,\ell} \otimes_\C \H^{r,s},\,\, 
        (4kr+4\ell s, 4ks+4\ell r)$}
        & $\Im\C$   \\ \hhline{|~|-|-|-|}
   & $H = U(k,\ell)\cdot Sp(n;\R),\, k+\ell=3$
        & $\C^{k,\ell} \otimes_\C \C^{2n},\,\, (6n,6n)$
        & $\Im\C$   \\ \hline
\end{longtable}
}

\section{Toward the Classification for Weakly Symmetric Pseudo--Riemannian
Nilmanifolds of Complex Type} \label{sec6}
\setcounter{equation}{0}

The classification of irreducible to indecomposable commutative spaces
is due to Yakimova.  It is combinatorial, based on her classification 
(\cite{Y2005}, \cite{Y2006}; or see \cite{W2007}) of indecomposable
commutative spaces --- subject to a few technical conditions.
In this section we broaden the scope of Table \ref{max-irred} from
irreducible to indecomposable commutative spaces, subject to those
technical conditions.  The technical conditions, which we explain
just below, are that $(N\rtimes H,H)$ be indecomposable, principal, 
maximal and $Sp(1)$--saturated.  

We work out the classification of weakly symmetric pseudo--riemannian
nilmanifolds of complex type for the real form families corresponding
to those indecomposable commutative spaces.  This is the main 
non--combinatorial step in classifying all the weakly symmetric 
pseudo--riemannian nilmanifolds of complex type.

Since $G = N\rtimes H$ acts almost--effectively on $M = G/H$, the
centralizer of $N$ in $H$ is discrete, in other words the representation
of $H$ on $\gn$ has finite kernel.  (In the notation of
\cite[Section 1.4]{Y2006} this says $H = L = L^0$ and $P = \{1\}$.)
That simplifies the general definitions \cite[Definition 6]{Y2006} of
{\bf principal} and \cite[Definition 8]{Y2006} of {\bf $Sp(1)$--saturated},
as follows.  Decompose $\gv$ as a sum $\gw_1 \oplus \dots \oplus \gw_t$ of
irreducible $\Ad(H)$--invariant subspaces.  Then $(G,H)$ is {\bf principal}
if $Z_H^0 = Z_1 \times \dots \times Z_m$ where $Z_i \subset GL(\gw_i)$,
in other words $Z_i$ acts trivially on $\gw_j$ for $j \ne i$.
Decompose $H = Z_H^0 \times H_1 \times \dots \times H_m$ where the $H_i$ are
simple.  Suppose that whenever some $H_i$ acts nontrivially on some $\gw_j$
and $Z_H^0 \times \prod_{\ell \ne i} H_\ell$ is irreducible on $\gw_j$, it
follows that $H_i$ is trivial on $\gw_k$ for all $k \ne j$.  Then
$H_i \cong Sp(1)$ and we say that $(G,H)$ is {\bf $Sp(1)$--saturated}.
The group $Sp(1)$ will be more visible in the definition when we
extend the definition to the cases where $H \ne L$.

In the following table, $\gh_{n;\F}$ is the Heisenberg algebra
$\Im\F + \F^n$ of real dimension $(\dim_\R\F - 1)+ n\dim_\R\F$.  Here
$\F$ is the real, complex, quaternion or octonion algebra over $\R$,
$\Im\F$ is its imaginary component, and
$$
\gh_{n;\F} = \Im \F + \F^n \text{ with product }
        [(z_1,v_1),(z_2,v_2)] = (\Im(v_1\cdot v_2^*),0)
$$
where the $v_i$ are row vectors and $v_2^*$ denotes the conjugate ($\F$
over $\R$) transpose of $v_2$\,.  It is the
Lie algebra of the (slightly generalized) Heisenberg group $H_{n;\F}$\,.
Also in the table, in the listing for $\gn$
the summands in double parenthesis ((..)) are the subalgebras
$[\gw,\gw] + \gw$ where $\gw$ is an $H$--irreducible subspace of $\gv$
with $[\gw,\gw] \ne 0$, and the summands not in double parentheses are
$H$--invariant subspaces $\gw \subset \gz$ with $[\gw,\gw] = 0$.  Thus
\begin{equation} \label{bigger-center}
\gn = \gz + \gv \text{ vector space direct sum, and } \gz = [\gn,\gn] \oplus \gu
\end{equation}
where the center $\gu$ is the sum of the summands listed for $\gn$ that are 
{\it not} enclosed in double parenthesis ((..)).

As before, when we write $m/2$ it is assumed that
$m$ is even, and similarly $n/2$ requires that $n$ be even.   Further
$k+\ell = m$ and $r+s = n$ where applicable.

\addtocounter{equation}{1}
{\footnotesize
\begin{longtable}{|r|l|l|l|}
\caption*{\bf \small
Table \thetable \quad Maximal Indecomposable Principal
Commutative Nilmanifolds $(N\rtimes H,H)$ \\
\centerline{$N$ Nonabelian, Where the Action of $H$ on
$\gn/[\gz,\gz]$ is Reducible}} \label{indecomp} \\
\hline
 & \begin{tabular}{c} Group $H$ and\\ Algebra $\gn$ \end{tabular}
        & \begin{tabular}{c} $H$--module $\gv$ and \\ 
		Signature$(\gv)$ \end{tabular}
        & \begin{tabular}{c} $[\gn,\gn]$ \\ $\gu$ \end{tabular} \\
\hline
\hline
\endfirsthead
\multicolumn{4}{l}{{\normalsize \textit{Table \thetable\, continued from
        previous page $\dots$}}} \\ \hline
 & \begin{tabular}{c} Group $H$ and\\ Algebra $\gn$ \end{tabular}
        & \begin{tabular}{c} $H$--module $\gv$ and \\ 
                Signature$(\gv)$ \end{tabular}
        & \begin{tabular}{c} $[\gn,\gn]$ \\ $\gu$ \end{tabular} \\
\hline
\hline
\endhead
\hline \multicolumn{4}{r}{{\normalsize \textit{$\dots$ Table \thetable\,
        continued on next page}}} \\
\endfoot
\hline
\endlastfoot

1 & \begin{tabular}{l} $U(r,s)$\\
        $((\gh_{r+s;\C}))+\gs\gu(r,s)$ \end{tabular}
        & \begin{tabular}{l} $\C^{r,s}$ \\ $(2r,2s)$ \end{tabular}
        & \begin{tabular}{l} $\Im\C$ \\ $\gs\gu(r,s)$ \end{tabular}
	\\ \hline\hline

2 & \begin{tabular}{l} $U(r,s), (r,s)=(4,0) \text{ or } (2,2)$\\
	 $((\Im\C + \Lambda^2(\C^{r,s}) +\C^{r,s}))+
		\Lambda^2(\R^{r,s})$\end{tabular} 
	& \begin{tabular}{l} $\C^{r,s}$ \\ $(2r,2s)$ \end{tabular}
        & \begin{tabular}{l} $\Lambda^2(\C^{r,s}) + \Im\C$ \\
		$\Lambda^2(\R^{r,s})$	\end{tabular} 
        \\ \hline\hline

3 & \begin{tabular}{l} $U(1)\cdot SU(r,s)\cdot U(1)$\\
	 $((\gh_{n;\C}))+((\gh_{n(n-1)/2;\C}))$ \end{tabular}
        & \begin{tabular}{l}  $\C^{r,s}\oplus\Lambda_\C^2(\C^{r,s})$ \\
                $(2r,2s)$\\
		\,\,$\oplus (r^2-r+s^2-s,2rs)$ \end{tabular}
        & \begin{tabular}{l} $\Im\C\oplus\Im\C$\\ $\{0\}$ \end{tabular} 
	\\ \hline\hline

4 & \begin{tabular}{l} $SU(r,s),\, (r,s) = (4,0) \text{ or } (2,2)$ \\
	 $((\Im\C + \Re\H^{2\times 2} + \C^{r,s})) + 
		\Lambda^2(\R^{r,s})$\end{tabular}
        & \begin{tabular}{l} $\C^{r,s}$ \\
	 $(2r,2s)$ \end{tabular} 
        & \begin{tabular}{l} $\Im\C \oplus \Re\H^{2\times 2}$ \\
		$\Lambda^2(\R^4)$\end{tabular}
	\\ \hline\hline

5 & \begin{tabular}{l} $U(k,\ell)\times U(r,s)$ \\
		$k+\ell = 2, (r,s) = (4,0) \text{ or } (2,2)$ \\
         $((\Im \C^{(k,\ell)\times (k,\ell)} + \C^{(k,\ell)\times (r,s)})) + 
		\Lambda^2(\R^{r,s})$\end{tabular}
        & \begin{tabular}{l} $\C^{(k,\ell)\times (r,s)}$ \\
		 $(2kr+2\ell s, 2ks + 2\ell r)$ \end{tabular}
        & \begin{tabular}{l} $\Im \C^{(k,\ell)\times (k,\ell)}$ \\
		$\Lambda^2(\R^{r,s})$ \end{tabular}
	\\  \hline\hline

6 & \begin{tabular}{l} $S(U(k,\ell)\times U(r,s)), 
		(k,\ell) = (4,0) \text{ or } (2,2)$ \\
		$((\gh_{4(r+s);\C})) + \Lambda^2(\R^{k,\ell})$ \end{tabular}
        & \begin{tabular}{l} $\C^{(k,\ell)\times(r,s)}$ \\
		 $(2kr+2\ell s, 2ks + 2\ell r)$ \end{tabular}
        & \begin{tabular}{l} $\Im\C$ \\ $\Lambda^2(\R^{k,\ell})$ \end{tabular}
	\\  \hline\hline

7 & \begin{tabular}{l} $U(k,\ell) \cdot U(r,s)$\\ 
	$((\gh_{(k+\ell,r+s);\C})) + ((\gh_{k+\ell;\C}))$ \end{tabular}
        & \begin{tabular}{l} $\C^{(k,\ell)\times (r,s)}\oplus\C^{k,\ell}$ \\
                $(2kr+2\ell s,2ks+2\ell r)$ \\
		\,\,\,$\oplus (2k,2\ell)$ \end{tabular}
        & \begin{tabular}{l} $\Im\C\oplus\Im\C$\\ $\{0\}$ \end{tabular} 
        \\ \hline\hline

8 & \begin{tabular}{l} $U(1)\cdot Sp(r,s)\cdot U(1)$\\ 
        	$((\gh_{2(r+s);\C})) + ((\gh_{2(r+s);\C}))$ \end{tabular} 
        & \begin{tabular}{l} $\C^{2r,2s}\oplus \C^{2r,2s}$ \\
		$(4r,4s)\oplus (4r,4s)$ \end{tabular}
        & \begin{tabular}{l} $\Im\C\oplus\Im\C$\\ $\{0\}$\end{tabular}
	 \\  \hhline{|~|-|-|-|}
  & \begin{tabular}{l} $U(1) \cdot Sp(n;\R)\cdot U(1)$ \\
	  $((\gh_{2n;\C})) \oplus ((\gh_{2n;\C}))$ \end{tabular}
        & \begin{tabular}{l} $\C^{n,n} \oplus \C^{n,n}$\\
	 $(2n,2n)\oplus (2n,2n)$ \end{tabular}
        &\begin{tabular}{l} $\Im\C\oplus\Im\C$\\ $\{0\}$\end{tabular} 
	\\ \hline\hline

11 & \begin{tabular}{l} $Sp(k,\ell)\cdot
	(U(1)\text{ or }\{1\})\cdot Sp(r,s)$ \\
           $((\gh_{k+\ell;\H})) + \H^{(k,\ell)\times (r,s)}$ \end{tabular}
        & \begin{tabular}{l}$\H^{k,\ell}$ \\ $(4k,4\ell)$ \end{tabular}
	& \begin{tabular}{l} $\Im\H = \gs\gp(1)$\\ 
		$\H^{(k,\ell)\times (r,s)}$ \end{tabular}
        \\  \hhline{|~|-|-|-|}
   & \begin{tabular}{l} $Sp(m;\R)\cdot (U(1) \text{ or }\{1\})\cdot Sp(n;\R)$\\
	  $((\gh_{m;\H})) + \C^{(m,m)\times(n,n)}$ \end{tabular}
        & \begin{tabular}{l} $\C^{m,m}$ \\ $(2m,2m)$ \end{tabular}
	& \begin{tabular}{l} $\gs\gp(1;\R)$ \\ $\C^{(m,m)\times(n,n)}$
		\end{tabular}
        \\ \hline\hline

12 & \begin{tabular}{l} $Sp(k,\ell)\cdot(U(1)\text{ or }\{1\})$  \\
        $((\gh_{k+\ell;\H})) + 
		\Re \H^{(k,\ell)\times (k,\ell)}_0$ \end{tabular}
	& \begin{tabular}{l} $\H^{k,\ell}$ \\ $(4k,4\ell)$ \end{tabular}
	& \begin{tabular}{l} $\Im\H = \gs\gp(1)$ \\ 
		$\Re \H^{(k,\ell)\times (k,\ell)}_0$ \end{tabular}
	\\  \hhline{|~|-|-|-|}
   & \begin{tabular}{l} $Sp(m;\R)\cdot (U(1)\text{ or } \{1\})$ \\
		$((\gh_{m;\H})) + \Re \H^{m\times m}_{sp,0}$\end{tabular}
	& \begin{tabular}{l} $\C^{m,m}$ \\ $(2m,2m)$ \end{tabular} 
	& \begin{tabular}{l} $\Im\H$ \\ $\Re \H^{m\times m}_{sp,0}$\end{tabular}
	\\ \hline\hline

13 & \begin{tabular}{l} $Spin(k,7-k)\cdot (SO(2)\text{ or } \{1\})$ \\
         $((\gh_{1;\O})) + \R^{(k,7-k)\times 2},\, 4\leqq k\leqq 7$ 
		\end{tabular}
        & \begin{tabular}{l} $\R^{q,8-q},\, q=2[\tfrac{k+1}{2}]$ 
		\\ $(q,8-q)$ \end{tabular}
        & \begin{tabular}{l} $\R^{k,7-k}$\\ $\R^{(k,7-k)\times 2}$\end{tabular} 
	\\ \hline\hline

14 & \begin{tabular}{l} $U(1)\cdot Spin(k,7-k),\, 4\leqq k\leqq 7$ \\
         $((\gh_{7;\C})) + \R^{q,8-q},\, q=2[\tfrac{k+1}{2}]$ \end{tabular}
        & \begin{tabular}{l} $\C^{2k,14-2k}$ \\ $(2k,14-2k)$ \end{tabular}
        & \begin{tabular}{l} $\Im\C$ \\ $\R^{q,8-q}$ \end{tabular}
        \\ \hline\hline

15 & \begin{tabular}{l} $U(1)\cdot Spin(k,7-k),\, 4\leqq k\leqq 7$ \\ 
	  $((\gh_{8;\C})) + \R^7$ \end{tabular}
        & \begin{tabular}{l} $\C^{q,8-q},\, q=2[\tfrac{k+1}{2}]$ \\
		$(2q,16-2q)$ \end{tabular}
        & \begin{tabular}{l} $\Im\C$ \\ $\R^{k,7-k}$ \end{tabular}
	\\  \hline\hline

16 & \begin{tabular}{l} $U(1)\cdot Spin(k,8-k)\cdot U(1)$ \\
		$((\gh_{8;\C})) + ((\gh_{8;\C}))$ \end{tabular}
        & \begin{tabular}{l} $\C^{k,\ell}_+ \oplus\C^{k,\ell}_-$ \\
		$(2k,2\ell)\oplus(2k,2\ell)$ \end{tabular}
        & \begin{tabular}{l} $\Im\C \oplus\Im\C$ \\  $\{0\}$ \end{tabular}
	\\ \hhline{|~|-|-|-|}
   & \begin{tabular}{l} $U(1)\cdot Spin^*(8)\cdot U(1)$ \\
		$((\gh_{8;\C})) + ((\gh_{8;\C}))$ \end{tabular}
        & \begin{tabular}{l} $\C^{4,4}\oplus\C^{4,4}$\\
		$(8,8)\oplus(8,8)$ \end{tabular}
        & \begin{tabular}{l} $\Im\C\oplus\Im\C$ \\ $\{0\}$ \end{tabular} 
	\\  \hline\hline

17 & \begin{tabular}{l} $U(1)\cdot Spin(2k,2\ell),\, 
		\begin{smallmatrix} k=3,4,5\\ \ell=5-k\end{smallmatrix}$\\
		$((\gh_{16;\C})) + \R^{2k,2\ell}$ \end{tabular}
        & \begin{tabular}{l} $\C^{q,16-q},\, q= 2^{[\frac{k+1}{2}]+2}$\\
		$(2q, 32-2q)$ \end{tabular}
	& \begin{tabular}{l} $\Im\C$ \\ $\R^{2k,2\ell}$ \end{tabular}
        \\ \hhline{|~|-|-|-|}
   & \begin{tabular}{l} $U(1)\cdot Spin^*(10)$ \\
	$((\gh_{16;\C})) + \R^{10}$ \end{tabular}
        & \begin{tabular}{l} $\C^{8,8}$\\$(16,16)$\end{tabular}
        & \begin{tabular}{l} $\Im\C$ \\ $\R^{10}$ \end{tabular}
	\\  \hline\hline

18 & \begin{tabular}{l} 
	$(SU(k,\ell)\text{ or } U(k,\ell)\text{ or }$ 
		$U(1)Sp(\tfrac{m}{2})) \cdot SU(r,s)$ \\
                \, $k+\ell=m, r+s=2$ \\
		$((\gh_{2m;\C})) + \gs\gu(r,s)$\end{tabular}
        & \begin{tabular}{l}$\C^{(k,\ell)\times (r,s)}$ \\
		$(2kr+2\ell s, 2ks+2\ell r)$\end{tabular}
        & \begin{tabular}{l} $\Im\C$ \\ $\gs\gu(r,s)$ \end{tabular}
	\\  \hhline{|~|-|-|-|}
    & \begin{tabular}{l} $Sp(m/2;\R)\cdot U(r,s)$ \\
	$((\gh_{2m;\C})) + \gs\gu(r,s)$\end{tabular}
        & \begin{tabular}{l} $\C^{(m/2,m/2)\times (r,s)}$\\
                $(2m,2m)$ \end{tabular}
        & \begin{tabular}{l} $\Im\C$ \\ $\gs\gu(r,s)$ \end{tabular}
	\\ \hline\hline

19 & \begin{tabular}{l}
        $(SU(k,\ell)\text{ or } U(k,\ell)\text{ or}$  
                $U(1)Sp(\tfrac{m}{2})) \cdot U(r,s)$ \\
                $k+\ell=m, r+s=2$ \\ 
                $((\gh_{2m;\C})) + ((\gh_{2;\C}))$\end{tabular}
        & \begin{tabular}{l}$\C^{(k,\ell)\times (r,s)}\oplus \C^{r,s}$ \\
                $(2kr+2\ell s, 2ks+2\ell r)$ \\
		\qquad $\oplus (2r,2s)$\end{tabular}
        & \begin{tabular}{l} $\Im\C \oplus \Im\C$ \\ $\{0\}$ \end{tabular}
	\\  \hhline{|~|-|-|-|}
    & \begin{tabular}{l} $U(1)Sp(\tfrac{m}{2};\R)\cdot U(r,s)$ \\
	$((\gh_{2m;\C})) + ((\gh_{2;\C}))$\end{tabular}	
        & \begin{tabular}{l} $\C^{(\tfrac{m}{2},\tfrac{m}{2})
		\times (r,s)}\oplus\C^{r,s}$ \\
                 $(2m,2m)\oplus (2r,2s)$ \end{tabular}
        & \begin{tabular}{l} $\Im\C\oplus\Im\C$ \\ $\{0\}$ \end{tabular}
	\\ \hline \hline

20 & {\tiny \begin{tabular}{l} $(SU(k,\ell), U(k,\ell), 
			U(1)Sp(\tfrac{k}{2},\tfrac{\ell}{2}))$
		$\cdot SU(a,b)$ \\
                $\quad\cdot (SU(r,s), U(r,s), 
			U(1)Sp(\tfrac{r}{2},\tfrac{s}{2}))$\\
		$k+\ell = m, a+b = 2, r+s = n$ \\
		$((\gh_{2m;\C})) + ((\gh_{2n;\C}))$ \\
		\end{tabular}}
        & \begin{tabular}{l}  $\C^{(k,\ell)\times (a,b)}\oplus 
                \C^{(a,b)\times (r,s)}$ \\
                $(2(ak+b\ell),2(a\ell +bk))$ \\
                $\,\oplus (2(ar+bs),2(as+br))$
                \end{tabular}
        & \begin{tabular}{l} $\Im\C\oplus\Im\C$ \\ $\{0\}$ \end{tabular}
	\\ \hhline{|~|-|-|-|}
    & {\tiny \begin{tabular}{l} $(SU(k,\ell), U(k,\ell),
		U(1)Sp(\tfrac{k}{2},\tfrac{\ell}{2}))$ \\
                $\quad\cdot SU(a,b)\cdot U(1)Sp(\tfrac{n}{2};\R)$ \\
 		$k+\ell = m, a+b = 2, r+s = n$\\
		$((\gh_{2m;\C})) + ((\gh_{2n;\C}))$ \\
		\end{tabular}}
        & {\tiny \begin{tabular}{l} $\C^{(k,\ell)\times (a,b)}\oplus 
                \C^{(a,b) \times(\tfrac{n}{2},\tfrac{n}{2})}$
                \\ $(2(ak+b\ell),2(a\ell +bk))\oplus 
		(\tfrac{n}{2},\tfrac{n}{2})$ 
		\end{tabular}}
        & \begin{tabular}{l} $\Im\C\oplus\Im\C$ \\ $\{0\}$ \end{tabular}
	\\ \hhline{|~|-|-|-|}
     & {\tiny \begin{tabular}{l} $U(1)Sp(\tfrac{m}{2};\R)\cdot SU(a,b)
		\cdot U(1)Sp(\tfrac{n}{2};\R)$\\
		$k+\ell = m, a+b = 2, r+s = n$ \\
		$((\gh_{2m;\C})) + ((\gh_{2n;\C}))$ \\
		\end{tabular}}
        &  \begin{tabular}{l} $\C^{(m/2,m/2) \times (a,b)}$\\
			$\qquad \oplus \C^{(a,b) \times(n/2,n/2)}$ \\
                $(2m,2m)\oplus (2n,2n)$ 
		\end{tabular}
        & \begin{tabular}{l} $\Im\C\oplus\Im\C$ \\ $\{0\}$ \end{tabular}
	\\ \hline\hline

21 & {\tiny \begin{tabular}{l} $(SU(k,\ell), U(k,\ell), 
                        U(1)Sp(\tfrac{k}{2},\tfrac{\ell}{2}))$\\
                $\quad \cdot SU(a,b)\cdot U(r,s), r,s \text{ even}$\\
	$k+\ell = m, a+b = 2, r+s=4$ \\
                $((\gh_{2m;\C})) + ((\gh_{8;\C})) + \Lambda^2(\R^{r,s})$ \\
                \end{tabular}}
        & {\tiny \begin{tabular}{l}  $\C^{(k,\ell)\times (a,b)}\oplus 
                \C^{(a,b)\times (r,s)}$ \\
                $(2(ak+b\ell),2(a\ell +bk))$ \\
                $\,\,\oplus (2(ar+bs),2(as+br))$
                \end{tabular}}
        & \begin{tabular}{l}$\Im\C \oplus \Im\C$ \\ $\Lambda^2(\R^{r,s})$ 
		\end{tabular}
	\\ \hhline{|~|-|-|-|}
    & {\tiny \begin{tabular}{l} $U(1)Sp(m;\R)\cdot SU(a,b))\cdot U(r,s))$ \\
	$a+b=2, (r,s)=(4,0) \text{ or } (2,2)$ \\
	$((\gh_{2m;\C})) + ((\gh_{8;\C})) + \Lambda^2(\R^{r,s})$
	\end{tabular}}
        & \begin{tabular}{l} $\C^{(m,m)\times (a,b)} 
		\oplus \C^{(a,b)\times (r,s)}$ \\ 
		$\begin{smallmatrix} (4m,4m) \oplus (2(ar+bs),2(as+br))\\
			 \end{smallmatrix}$\end{tabular}
        & \begin{tabular}{l}$\Im\C \oplus \Im\C$ \\ $\Lambda^2(\R^{r,s})$ 
                \end{tabular}
	\\ \hline\hline

22 & {\tiny \begin{tabular}{l} $U(a,b)\cdot U(r,s)$ \\
 	$a+b=2, (r,s)=(4,0) \text{ or } (2,2)$ \\
         $((\gh_{8;\C})) + \Lambda^2(\R^{r,s}) +\gs\gu(2)$ \end{tabular}}
        & \begin{tabular}{l} $\C^{(a,b)\times (r,s)} $\\
	 $\,(ar+bs,as+br)$\end{tabular}
        & \begin{tabular}{l} $\Im\C$\\ {\tiny $\Lambda^2(\R^{r,s}) +\gs\gu(2)$}
		\end{tabular}
	\\ \hline\hline

23 & {\tiny \begin{tabular}{l} $U(k,\ell)\cdot U(a,b)\cdot U(r,s)$ \\
		$(k,\ell) = (4,0) \text{ or } (2,2),  a+b=2,$ \\
		$\qquad (r,s) = (4,0) \text{ or } (2,2)$\\
                 $\Lambda^2(\R^{k,\ell}) + ((\gh_{8;\C}))$ \\
		$\qquad + ((\gh_{8;\C})) + \Lambda^2(\R^{r,s})$ 
		\end{tabular}}
        & \begin{tabular}{l} $\C^{(k,\ell)\times (a,b)} 
		\oplus \C^{(a,b)\times (r,s)}$ \\
		$(ak+b\ell,a\ell +bk)$ \\
		  $\quad \oplus (ar+bs,as+br)$ 
                \end{tabular}
        & \begin{tabular}{l} $\Im\C\oplus\Im\C$ \\
		$\Lambda^2(\R^{k,\ell})$ \\  
		$\quad +\Lambda^2(\R^{r,s})$ \end{tabular}
	\\  \hline\hline

24 & \begin{tabular}{l} $U(1)\cdot SU(k,\ell)\cdot U(1)$ \\
		$(k,\ell) = (4,0) \text{ or } (2,2)$ \\
              $((\gh_{4;\C}))+((\gh_{4;\C})) + \Lambda^2(\R^{k,\ell})$
		\end{tabular}
        & \begin{tabular}{l}$\C^{k,\ell}\oplus\C^{k,\ell}$\\
		$(2k,2\ell)\oplus(2k,2\ell)$\end{tabular}
        & \begin{tabular}{l} $\Im\C\oplus\Im\C$ \\
		$\Lambda^2(\R^{k,\ell})$ \end{tabular}
	\\  \hline\hline

25 & {\tiny \begin{tabular}{l} $(U(1), \{1\}))\cdot
		SU(k,\ell))\cdot (\{1\}, U(1))$, \\
        $k+\ell = 4$ \\
	$((\gh_{4;\C})) + \Lambda^2(\C^{k,\ell})$ \end{tabular}}
        & \begin{tabular}{l} $\C^{k,\ell}$\\
		$(2k,2\ell)$\end{tabular}
        & \begin{tabular}{l} $\Im\C$ \\ $\Lambda^2(\C^{k,\ell})$ \end{tabular}
	\\   \hline
\end{longtable}
}

\pagebreak


\begin{thebibliography}{99}

\bibitem{AV1965}
A. Andreotti \& E. Vesentini,
Carleman estimates for the Laplace--Beltrami operator on complex manifolds,
Inst. Hautes \'Etudes Sci. Publ. Math. {\bf 25} (1965), 81--130.

\bibitem{B1957}
R. Bott,
Homogeneous vector bundles, Annals of Math. {\bf 66} (1957),
203--248.

\bibitem{CW2017}
Z. Chen \& J. A. Wolf,
Semisimple weakly symmetric pseudo-riemannian manifolds, 
Abh. Math. Semin. Univ. Hamburg {\bf 88} (2018) pp. 331--369. 
the Abh. Math. Seminar Hamburg.
\{arXiv: 1707.01181\}.

\bibitem{FHW2005}
G. Fels, A. T. Huckleberry \& J. A. Wolf,
 "Cycle Spaces of Flag Domains: A Complex Geometric Viewpoint".  Progress
in Mathematics, vol. 245, Birkh\"auser/Springer Boston, 2005.

\bibitem{HC1965}
Harish--Chandra,
Discrete series for semisimple Lie groups, I,
Acta Math. {\bf 113} (1965), 241--317.

\bibitem{K1962} A. A. Kirillov,
Unitary representations of nilpotent Lie groups,
Uspekhi Math. Nauk {\bf 17} (1962), 57--110
(English: Russian Math. Surveys {\bf 17} (1962), 53--104).

\bibitem{MW1973}
C. C. Moore \& J. A. Wolf,
Square integrable representations of nilpotent groups, Trans. Amer. Math. Soc. 
{\bf  185} (1973), 445--462.

\bibitem{NO1970}
M. S. Narasimhan \& K. Okamoto,
An analogue of the Borel--Weil--Bott theorem for hermitian symmetric pairs
of noncompact type, Ann. of Math. {\bf 91} (1970), 486--511.

\bibitem{Sa1971}
I. Satake,
Unitary representations of a semi--direct product of Lie groups on 
$\overline{\partial}$--cohomology spaces,
Math. Ann. {\bf 190} (1971), 177--202.

\bibitem{Sc1971}
W. Schmid,
On a conjecture of Langlands, Ann. Math. {\bf 93} (1971), 1--42.

\bibitem{S1976}
W. Schmid, $L^2$ cohomology and the discrete series, Ann. Math. {\bf 103}
(1976), 375--394.

\bibitem{W1964}
J. A. Wolf,
Curvature in nilpotent Lie groups, Proc. Amer. Math. Soc. {\bf 15}
(1964), 271--274.

\bibitem{W1969}
J. A. Wolf,
The action of a real semisimple group on a complex flag manifold, I:
Orbit structure and holomorphic arc components. Bull. Amer. Math. Soc.
{\bf 75} (1969), 1121--1237.

\bibitem{W1973}
J. A. Wolf,
The action of a real semisimple group on a complex flag manifold, II:
Unitary representations on partially holomorphic cohomology spaces.
Memoirs of the American Mathematical Society, Number 138, 1974.

\bibitem{W1974}
J. A. Wolf,
Partially harmonic spinors and representations of reductive Lie groups,
J. Functional Analysis {\bf 15} (1974), 117--154.

\bibitem{W1975}
J. A. Wolf,
Representations of certain semidirect product groups. 
J. Functional Analysis, {\bf 19} (1975), 339--372.

\bibitem{W2007}
J. A. Wolf,
``Harmonic Analysis on Commutative Spaces'', Mathematical Surveys and
Monographs, \textbf{142}. American Mathematical Society, 2007.

\bibitem{W2016}
J. A. Wolf,
Stepwise square integrable representations: the concept and some consequences, 
in ``Lie Theory and Its Applications in Physics'', Springer Proceedings in 
Mathematics and Statistics, Ed. V. Dobrev, vol. 191 (2016), 181--202.

\bibitem{W2018}
J. A. Wolf,
Representations on Partially Holomorphic Cohomology Spaces, Revisited, 
Contemporary Mathematics, {\bf 714} (2018), 253--303. 


\bibitem{WC2018}
J. A. Wolf \& Z. Chen,
Weakly Symmetric Pseudo--Riemannian Nilmanifolds, to appear. 
\{arXiv: 1806.07847 (math DG)\}

\bibitem{W1995}
H.-W. Wong,
Dolbeault cohomological realization of Zuckerman modules associated with
finite rank representations, J. Functional Analysis {\bf 129} (1995),
428--454.

\bibitem{W1999}
H.-W. Wong,
Cohomological induction in various categories and the maximal globalization
conjecture, Duke J. Math. {\bf 96} (1999), 1--27.

\bibitem{Y2004}
O. S. Yakimova,
Weakly symmetric riemannian manifolds with reductive isometry group,
Math. USSR Sbornik {\bf 195} (2004), 599--614.

\bibitem{Y2005}
O. S. Yakimova,
``Gelfand Pairs,'' Bonner Math. Schriften (Universit\" at Bonn)
{\bf 374}, 2005.

\bibitem{Y2006}
O. S. Yakimova,
Principal Gelfand pairs,
Transformation Groups {\bf 11} (2006), 305--335.

\end{thebibliography}
\end{document}